\documentclass[12pt]{article}

\usepackage{amssymb,bm}
\usepackage{amsmath}
\usepackage{amsthm}
\usepackage{graphicx}
\usepackage[active]{srcltx} 
\usepackage{hyperref} 
\hypersetup{pdfborder=0 0 0}
\setlength{\oddsidemargin}{0in}
\setlength{\evensidemargin}{-0.0625in}
\setlength{\textwidth}{6.5in}
\setlength{\topmargin}{-.5in}
\setlength{\textheight}{8.8in}


\newtheorem{theorem}{{\sc Theorem}}[section]

\newtheorem{lemma}[theorem]{{\sc Lemma}}
\newtheorem{corollary}[theorem]{Corollary}
\newtheorem{remark}[theorem]{Remark}

\newtheorem{definition}[theorem]{Definition}

\newcommand{\bb}[1]{\mathbb{ #1}}

\bmdefine\Bone{1}

\newcommand{\dev}[1]{\mathrm{dev}(#1)}

\newcommand{\dOm}{\partial\Omega}

\newcommand{\bra}[1]{\overline{#1}}

\newcommand{\Trc}{\mathrm{Tr}\,}

\newcommand{\rank}{\mathrm{rank}}
\newcommand{\tns}[1]{#1\otimes #1}
\newcommand{\hf}{\displaystyle\frac{1}{2}}
\newcommand{\nth}[1]{\displaystyle\frac{1}{#1}}

\newcommand{\dif}[2]{\displaystyle\frac{\partial #1}{\partial #2}}
\newcommand{\Grad}{\nabla}
\newcommand{\Div}{\nabla \cdot}

\newcommand{\Md}{\partial}

\newcommand{\Tld}[1]{\widetilde{#1}}

\newcommand{\av}[1]{\langle #1 \rangle}

\def\Xint#1{\mathchoice
{\XXint\displaystyle\textstyle{#1}}%
{\XXint\textstyle\scriptstyle{#1}}%
{\XXint\scriptstyle\scriptscriptstyle{#1}}%
{\XXint\scriptscriptstyle\scriptscriptstyle{#1}}%
\!\int}
\def\XXint#1#2#3{{\setbox0=\hbox{$#1{#2#3}{\int}$ }
\vcenter{\hbox{$#2#3$ }}\kern-.6\wd0}}

\def\dashint{\Xint-}

\newcommand{\re}{\Re\mathfrak{e}}
\newcommand{\jump}[1]{\lbrack\!\lbrack #1 \rbrack\!\rbrack}
\newcommand{\lump}[1]{\lbrace\skew{-14.7}\lbrace\!\!#1\!\!\skew{14.7}\rbrace\rbrace}

\newcommand{\bc}{boundary condition}

\newcommand{\rhs}{right-hand side}
\newcommand{\lhs}{left-hand side}
\newcommand{\mc}{microstructure}

\newcommand{\nbh}{neighborhood}
\newcommand{\IFF}{if and only if }


\newcommand{\Ga}{\alpha}
\newcommand{\Gb}{\beta}
\newcommand{\Gd}{\delta}
\newcommand{\Ge}{\epsilon}

\newcommand{\Gve}{\varepsilon}

\newcommand{\Gk}{\kappa}

\newcommand{\Gth}{\theta}

\newcommand{\Go}{\omega}

\newcommand{\GD}{\Delta}

\newcommand{\GG}{\Gamma}

\newcommand{\GO}{\Omega}

\bmdefine\BGa{\alpha}
\bmdefine\BGb{\beta}
\bmdefine\BGd{\delta}
\bmdefine\BGe{\epsilon}
\bmdefine\BGve{\varepsilon}
\bmdefine\BGf{\phi}
\bmdefine\BGvf{\varphi}
\bmdefine\BGg{\gamma}
\bmdefine\BGc{\chi}
\bmdefine\BGi{\iota}
\bmdefine\BGk{\kappa}
\bmdefine\BGl{\lambda}
\bmdefine\BGn{\eta}
\bmdefine\BGm{\mu}
\bmdefine\BGv{\nu}
\bmdefine\BGp{\pi}
\bmdefine\BGth{\theta}
\bmdefine\BGvth{\vartheta}
\bmdefine\BGr{\rho}
\bmdefine\BGvr{\varrho}
\bmdefine\BGs{\sigma}
\bmdefine\BGvs{\varsigma}
\bmdefine\BGt{\tau}
\bmdefine\BGj{\tau}
\bmdefine\BGu{\upsilon}
\bmdefine\BGo{\omega}
\bmdefine\BGx{\xi}
\bmdefine\BGy{\psi}
\bmdefine\BGz{\zeta}
\bmdefine\BGD{\Delta}
\bmdefine\BGF{\Phi}
\bmdefine\BGG{\Gamma}
\bmdefine\BGL{\Lambda}
\bmdefine\BGP{\Pi}
\bmdefine\BGT{\Theta}
\bmdefine\BGS{\Sigma}
\bmdefine\BGU{\Upsilon}
\bmdefine\BGO{\Omega}
\bmdefine\BGX{\Xi}
\bmdefine\BGY{\Psi}

\bmdefine\BFM{\mathfrak{M}}
\bmdefine\BFb{\mathfrak{b}}
\bmdefine\BFk{\mathfrak{k}}
\bmdefine\BFm{\mathfrak{m}}
\bmdefine\BFu{\mathfrak{u}}
\bmdefine\BFv{\mathfrak{v}}

\newcommand{\CA}{{\mathcal A}}

\newcommand{\CE}{{\mathcal E}}

\newcommand{\CO}{{\mathcal O}}

\bmdefine\BCA{{\mathcal A}}
\bmdefine\BCB{{\mathcal B}}
\bmdefine\BCC{{\mathcal C}}
\bmdefine\BCD{{\mathcal D}}
\bmdefine\BCE{{\mathcal E}}
\bmdefine\BCF{{\mathcal F}}
\bmdefine\BCG{{\mathcal G}}
\bmdefine\BCH{{\mathcal H}}
\bmdefine\BCI{{\mathcal I}}
\bmdefine\BCJ{{\mathcal J}}
\bmdefine\BCK{{\mathcal K}}
\bmdefine\BCL{{\mathcal L}}
\bmdefine\BCM{{\mathcal M}}
\bmdefine\BCN{{\mathcal N}}
\bmdefine\BCO{{\mathcal O}}
\bmdefine\BCP{{\mathcal P}}
\bmdefine\BCQ{{\mathcal Q}}
\bmdefine\BCR{{\mathcal R}}
\bmdefine\BCS{{\mathcal S}}
\bmdefine\BCT{{\mathcal T}}
\bmdefine\BCU{{\mathcal U}}
\bmdefine\BCV{{\mathcal V}}
\bmdefine\BCW{{\mathcal W}}
\bmdefine\BCX{{\mathcal X}}
\bmdefine\BCY{{\mathcal Y}}
\bmdefine\BCZ{{\mathcal Z}}

\bmdefine\Bzr{ 0}
\bmdefine\Ba{ a}
\bmdefine\Bb{ b}
\bmdefine\Bc{ c}
\bmdefine\Bd{ d}
\bmdefine\Be{ e}
\bmdefine\Bf{ f}
\bmdefine\Bg{ g}
\bmdefine\Bh{ h}
\bmdefine\Bi{ i}
\bmdefine\Bj{ j}
\bmdefine\Bk{ k}
\bmdefine\Bl{ l}
\bmdefine\Bm{ m}
\bmdefine\Bn{ n}
\bmdefine\Bo{ o}
\bmdefine\Bp{ p}
\bmdefine\Bq{ q}
\bmdefine\Br{ r}
\bmdefine\Bs{ s}
\bmdefine\Bt{ t}
\bmdefine\Bu{ u}
\bmdefine\Bv{ v}
\bmdefine\Bw{ w}
\bmdefine\Bx{ x}
\bmdefine\By{ y}
\bmdefine\Bz{ z}
\bmdefine\BA{ A}
\bmdefine\BB{ B}
\bmdefine\BC{ C}
\bmdefine\BD{ D}
\bmdefine\BE{ E}
\bmdefine\BF{ F}
\bmdefine\BG{ G}
\bmdefine\BH{ H}
\bmdefine\BI{ I}
\bmdefine\BJ{ J}
\bmdefine\BK{ K}
\bmdefine\BL{ L}
\bmdefine\BM{ M}
\bmdefine\BN{ N}
\bmdefine\BO{ O}
\bmdefine\BP{ P}
\bmdefine\BQ{ Q}
\bmdefine\BR{ R}
\bmdefine\BS{ S}
\bmdefine\BT{ T}
\bmdefine\BU{ U}
\bmdefine\BV{ V}
\bmdefine\BW{ W}
\bmdefine\BX{ X}
\bmdefine\BY{ Y}
\bmdefine\BZ{ Z}


\newcommand{\SFC}{\mathsf{C}}


\usepackage{color}

\title{A class of nonlinear elasticity problems with no local  but many global minimizers
}
\author{\em Yury Grabovsky \and\em Lev Truskinovsky}

\begin{document}
\maketitle

\hspace{54ex}{\footnotesize\emph{Dedicated to 85th birthday of R. Fosdick}}

\begin{abstract}
  We present  a  class of  models of  elastic phase
  transitions with incompatible energy wells in any space dimension,
  where  an   abundance of Lipschitz global minimizers in a hard device
  coexists with a complete lack of strong local minimizers.
The analysis hinges on the   proof that every
  strong local minimizer in a hard device is also a global
  minimizer which is applicable   much  beyond the chosen   class of models.  Along the way we show that  a  new proof of sufficiency for  a subclass of  affine boundary conditions  can be  built   around a  novel  nonlinear generalization of the   classical Clapeyron theorem,   whose subtle relation to dynamics was studied extensively by  R. Fosdick.

\end{abstract}

\section{Introduction}
\setcounter{equation}{0}
\label{sec:intro}
The phenomenon of \emph{metastability} in elastostatics, manifesting itself through
the existence of strong local minimizers that are not global\footnote{   The implied interpretation of the physical concept of \emph{metastability}  is not the only one possible. In physics literature the term \emph{metastability} usually refers to the presence
    of configurations that are stable only under sufficiently small
    perturbations. Here we specify the notion of ``small'' and interpret 
     strong
    local minimizers   as configurations minimizing among competitors  with deformation fields that 
    are uniformly close. A more conventional weak local minimizers  are configurations minimizing among
    the competitors with not only    deformations but also  deformation gradients  that are  uniformly
    close. 
     }, is usually associated  with Neumann
boundary conditions  (soft device) and linked to the incompatibility of the energy wells
\cite{baja15}. In Dirichlet  problem (hard device) the situation is different  and the incompatibility  of the energy wells is not sufficient  for \emph{metastability}.  To corroborate this statement   we present here  a class of hyperelastic materials with incompatible energy wells 
  for which one can prove the absence of \emph{metastability}  for any Dirichlet \bc s (hard device).  
  
   Moreover,  we prove for  the associated  class of vectorial variational problems  the absence of strong local minimizers which are not global on any domain and
in any number of dimensions.  For a subclass of such problems, amenable to an especially  transparent analytically
  treatment,   we exhibit a concurrent  
dramatic nonuniqueness of global minimizers.  
While the    simplest  elasticity problem, exhibiting  in a hard device both the    abundance of global minimizers  and  the  lack of strong local  minimizers, is a scalar problem  in one dimension \cite{erick75}, the question whether the effect survives in a multidimensional vectorial setting has been so far open. The general fact that  the elastic energy relaxation can in principle produce 
 a  wild non-uniqueness of optimal microstructures 
 is  well-known \cite{DW,gra}, however, whether such non-uniqueness   extends or not to strong local minimizers is usually difficult to check.

We recall that  to ensure uniqueness in nonlinear elasticity, the use of
Dirichlet boundary conditions and the presence of topological, or even geometric simplicity of the domain,  are essential
\cite{posi97,tahe05,spad09}. 
Uniqueness has been
established 
for  star-shaped domains, affine
displacement boundary conditions, and strictly quasiconvex stored energy
functions \cite{knst84,tahe03}. It was also understood that whether uniqueness holds may  depend on the
regularity class in which one looks for a minimizer \cite{ktw03}, as well as  its  integrability class \cite{ball82,sivo92,hopo95}. 
Instead, for mixed, Dirichlet-Neumann  boundary-value problems of nonlinear elasticity,
 nonuniqueness   has been found ubiquitous  with  the most familiar examples being those
 associated with buckling and related  to the  emergence   of multiple symmetry-related energy minima  
\cite{eul1744}, but not only \cite{grtr07}.
The possibility of   nonuniqueness  with
  purely Dirichlet \bc s is usually    associated with   quenched  inhomogeneity
as in  problems with residual stresses  or non-zero body tractions 
 \cite{edfo68,ledr87,trma85}. 

 The interest of a  simple example considered in this paper stems from the fact  that  it exhibits multiplicity
 of  global minimizers  in a hard device problem   in the
 absence of any geometrical  complexity of the domain and does not require
  quenched inhomogeneity. Furthermore, the obtained non-uniqueness is not due
 to the scalar nature of the problem (absence of compatibility constraint)  and  is unrelated to symmetry related degeneracy as in the case of Euler buckling. 
   Instead,  it is evocative of  nonuniqueness in the problems of  \mc\ optimization  in composites
 \cite{a,MK,gra,LU}.

In this paper we limit our attention to the class of  ``geometrically
linearized''  Hadamard materials which can be viewed as   simplified analogs   of the fully nonlinear Hadamard materials \cite{hada03,john66}.  The  energy density functions of the latter are of the form
\begin{equation}
  \label{Hadamard}
  W(\BF)=\mu|\BF|^{2} +h(\det\BF).
\end{equation}
Here $\BF$ is the deformation gradient and $\mu$ is the measure of rigidity. The non-negative function $h(d)$ is defined on $(0,+\infty)$ and has the
property that $h(d)\to\infty$, as $d\to 0^{+}$.  If we now formally use for the materials   \eqref{Hadamard}
the   ``geometric'' approximation  $\det\BF\approx 1+\Trc(\BF-\BI)$,   valid in the limit $\BF\to\BI$,  a   formal asymptotic expansion with respect
to a small parameter would necessarily also imply the  ``physical'' linearization  which    trivializes the problem. Since our goal 
is to  retain and exploit physical non-linearity, we consider instead the energy
\begin{equation}
  \label{Hadamod}
  W(\BH)=g(\Trc\BH)+\mu|\BH|^{2},\qquad \BH=\BF-\BI.
\end{equation}
which is formally unrelated to \eqref{Hadamard}. However, following  \cite{kh,buhala83,karo88,abji89I,abji89II}, we can  perform geometric linearization in the second term in  \eqref{Hadamod}, replacing
$\BH$ with its symmetric part $\BGve=(\BH+\BH^{t})/2$ and, to emphasize the  isotropic nature of the resulting energy density,     write the result  in the form 
\begin{equation}
  \label{ex:ener}
  W_{0}(\BH)=f(\Trc\BGve)+\mu|\dev{\BGve}|^{2},\qquad \dev{\BGve}=\BGve-\nth{d}\Trc(\BGve)\BI.
\end{equation}
Despite the achieved formal simplification, the mathematical features  of interest are similar in the equilibrium  problems  for materials with the energies (\ref{Hadamod}) and (\ref{ex:ener}). We will therefore, focus on (\ref{ex:ener}), knowing that  it also preserves  at least the linearized version of the frame indifference  property.

\begin{figure}
\centering
    \includegraphics[scale=0.3]{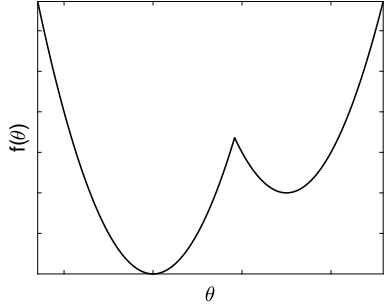} 
\label{fig:ftheta}
\caption{Double-well nonlinearity in a geometrically linear bi-quadratic Hadamard material.}
\end{figure}
  
We assume that $\mu>0$ and choose the function $f$ to be nonconvex and have  a 
``double-well'' shape so that  the full
energy has incompatible  energy wells.
For additional analytical transparency  we  will use in our explicit constructions the bi-quadratic potential 
\begin{equation}
 \label{linisel}
 f(\Gth)=\min\{\Gk_{0}\Gth^{2},\Gk_{0}(\Gth-\Gth_{p})^{2}+f_{0}\}, 
\end{equation}
illustrated in Fig.~\ref{fig:ftheta}. The ensuing model describes a material capable of undergoing a purely dilatational phase transformation
between elastic phases which are both linearly elastic and have  the same  elastic  moduli but  different reference (chemical) energies.   The relaxation in such elasticity problems is nontrivial and historically  has been viewed as a benchmark case for various mathematical theories of elastic phase transitions \cite{kh,DW,pipk91}.

If the energy wells are rank-one connected, the   quasiconvexification
of a non-convex  energy can be expected to coincide with rank-one convexification and in many cases even with convexification.
Instead, the incompatibility of the wells
can be responsible for a nontrivial structure of the quasiconvex envelope. 
  In this respect the  material (\ref{Hadamod})  is of interest  
 because, even though  the non-convexity enters
\eqref{ex:ener} only through a scalar potential, the energy wells remain not
rank-one connected. 
This means, for instance, that  even  if the energy wells in \eqref{Hadamod}
  are both at zero energy,  not only the relaxed energy  is nonzero but also the  optimal  microstructures are highly nontrivial.

The energy density (\ref{ex:ener}) is  non
 rank-one convex for all $\mu>0$, as long as  $f(\Gth)$ is given by (\ref{linisel})  \cite{DW}. Materials  with such energies,  loaded in a soft device,   are expected to exhibit   constitutive  hysteresis 
 \cite{bchj95,jwk19,rosl97,silh03,zjm09}. Instead,  in the hard device,   the strong local
 minima in the model \eqref{ex:ener} are absent, the quasistatic mechanical response 
 is defined uniquely  and the deformation path is reversible as it  corresponds to global minimization of the total energy at each value of the loading parameter.
The associated stress-strain relation can be obtained from  the knowledge of
the  quasiconvex envelope of the  energy \eqref{ex:ener};  the latter  can be
characterized explicitly in the whole range of parameters   for all
double-well shaped potentials\footnote{ In fact, the
    quasiconvex envelope of the  energy can be characterized for all
    continuous potentials
  $f(\Gth)$ that are bounded from below.} $f(\Gth)$. Here, to make the paper self-contained,   we compute  the   relaxation of the energy  \eqref{ex:ener} using simple laminates as the  optimal microstructure; a different proof of the same result,  based on   matching of upper and lower bounds,  was previously given  in  \cite{grtrpcx}.

The most important  general (not energy-specific) result obtained  in  this paper is the formulation of the far-reaching necessary
and sufficient conditions for global minimizers in star-shaped domains with
affine boundary conditions. This result provide new   tools for  characterizing  energy minimizing configurations in examples where the energy relaxation, or at least
the elastic binodal, is explicitly known. An important technical lemma in the associated 
analysis is a fundamental nonlinear generalization of the classical Clapeyron theorem\footnote{The   necessity of Clapeyron theorem-type formula in the study of uniqueness in the Dirichlet setting was first realized   by  Knops and Stuart in
  \cite{knst84}.}
which links the energy of the equilibrium configuration with the work of the (generalized) forces on the boundary of the body. 
Combining Clapeyron's theorem with affine \bc s and local
    material stability (quasiconvexity) we obtain the inequality establishing
    global minimality of all stationary locally stable configurations in
    star-shaped domains. All these results may be viewed, in a more general perspective, as an extension
  of optimality conditions for extremal \mc s in composites (e.g. \cite{gra}). 

As an application of the   obtained necessary and sufficient conditions we show that  one can reduce the problem of finding a global minimizer for the energy (\ref{ex:ener}) to the solution of a nonlinear free boundary problem involving a system of  linear PDEs 
 in a finite domain. It is remarkable  that  the solution of the ensuing problem  can be found explicitly 
 for any double-well scalar potential $f(\Gth)$. The degeneracy in this problem, due to the fact  the  optimality conditions  
 do not place very stringent constraints on the displacement gradient in an
 optimal configuration,  is behind the 
 multiplicity of global minimizers. We show however, in Appendix~\ref{sec:MOMA},  that  topologically
 simple energy minimizers which maintain the symmetry of the domain may fail to exist in some domains with corners.

Our  analysis of optimal microstructures suggests that, despite the unavailability of the constitutive   hysteresis in the considered  class of problems,   which lack  local minimizers, the direct and the reverse
transformation may follow morphologically different transformation paths in the real physical 
space. Therefore, despite  the absence of
\emph{metastability},
 one may  encounter a   phenomenon of  ``morphological hysteresis'', whereby the loading and unloading may occur along different morphological paths.  The fact, that along such paths the system
traverses distinct but energetically equivalent configurations,
may  be of interest in applications if
  one could design a way  of biasing  one of the paths  through, say,  imposing
  additional control on the gradients.

The paper is organized as follows. In Section \ref{sec:slm} we
 compute the quasiconvexification of the energy density
  (\ref{ex:ener}), and use the result to prove the absence of strong local
  minimizers that are not global in any domain and for any Dirichlet type \bc s.
The necessary and sufficient conditions for global minimizers are formulated in
Section \ref{sec:necsuf}. The issue of attainment and the multiplicity of
global minimizers in our model  are discussed in Section \ref{sec:attainment}.
The last  Section \ref{sec:conc} summarizes  the  results and
presents conclusions. Our Appendix \ref{sec:ac} contains a technical
discussion of the  generic  loss of strict ellipticity  of the
rank one convexification of an energy density. In Appendix~\ref{sec:MOMA} we
show the non-existence of square-symmetric simply connected inclusion
minimizing the energy in the square.

\section{Local minimizers}
 \setcounter{equation}{0}
\label{sec:slm}
Given that the  case of interest  involves hard device loading, our goal is
to study strong local and global minima of the functional
\begin{equation}
\label{hdener}
\CE_{0}(\Bu)=\int_{\GO}
\left\{f(\Div\Bu)+
\mu\left|e(\Bu)-\nth{d}(\Div\Bu)\BI\right|^2\right\}d\Bx
\end{equation}
among all Lipschitz displacement vector fields $\Bu\in
W^{1,\infty}(\GO;\bb{R}^{d})$ subject to the  constraint
\begin{equation}
\label{Dbc}
\Bu(\Bx)=\Bu_{0}(\Bx),\quad \Bx\in\dOm,
\end{equation}
where $\Bu_{0}(\Bx)$ is a given Lipschitz function on $\dOm$. Here
$\GO\subset\bb{R}^{d}$ denotes a Lipschitz domain. Restricting attention to
Lipschitz minimizers allows us to focus on instabilities caused by failure of
quasiconvexity \cite{morr52,daco82} and exclude some other instabilities
related, for instance, to the mismatch between the integrability of the
minimizing sequences and the growth of the energy density at infinity
\cite{bamu84}.

First we recall that one of the necessary conditions of strong local
minimality is stability with respect to nucleation of a coherent precipitate
of the new phase in the interior of the old phase. That means that we must
have $\Grad\Bu(\Bx)\in\BCA$, for a.e. $\Bx\in\GO$,
\cite[Proposition~4.1]{tahe05},
where
\begin{equation}
  \label{admset}
  \BCA=\{\BH\in\bb{M}:W_{0}(\BH)=QW_{0}(\BH)\}, 
\end{equation}
and where $\bb{M}$ denotes the set of all $d\times d$ matrices and $QW_{0}(\BH)$ is
the quasiconvexification of $W_{0}$ \cite{daco82}.

In this paper we show that if $\Bu(\Bx)$ is a weak solution of the
Euler-Lagrange equation for the energy (\ref{hdener}) with the \bc s
(\ref{Dbc}) and satisfies $\Grad\Bu(\Bx)\in\BCA$ for a.e. $\Bx\in\GO$ then it
is necessarily a global minimizer for $\CE_{0}(\Bu)$. However, before we move
to the actual analysis, it is appropriate to mention that we are not aware of
any examples of proper metastable states in a hard device in domains with
trivial topology. Moreover it has been proved for general energies that every
strong local minimizer is global in the hard device with affine Dirichlet \bc
s in star-shaped domains \cite{tahe03}. In our special example, the same
result turns out to be true for all Dirichlet \bc s in domains with piecewise smooth
boundaries and arbitrary topology.
 
The first step of the analysis is to characterize the set $\BCA$ of admissible displacement gradients for the  chosen material  model. It can result from
  the computation of $QW_{0}(\BH)$   which  in our case  can be done explicitly.  
\begin{theorem}
  \label{th:exqcx}
Suppose that $W_{0}(\BH)$ is given by (\ref{ex:ener}). Then,
  \begin{equation}
    \label{ex:qcx}
    QW_{0}(\BH)=\Phi^{**}(\Trc\BH)+\frac{\mu}{4}|\BH-\BH^{t}|^{2}-2\mu J_2(\BH),
  \end{equation}
where 
\begin{equation}
\Phi(\Gth)=f(\Gth)+\frac{d-1}{d}\mu\Gth^2,
\label{Fdef}
\end{equation}
\begin{equation}
2J_2(\BH)=(\Trc\BH)^{2}-\Trc(\BH^{2})=2\sum_{i<j}\left|
\begin{array}{rr}
h_{ii} & h_{ij}\\
h_{ji} & h_{jj}
\end{array}
\right|
\label{J2inv}
\end{equation}
is a quadratic null-Lagrangian, and $\Phi^{**}$ denotes the convexification of
$\Phi(\Gth)$.
\end{theorem}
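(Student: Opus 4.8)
The plan is to reduce the computation of $QW_{0}$ to a purely one-dimensional convexification by first rewriting $W_{0}$ in a form that isolates its single source of nonconvexity. Using $|\dev{\BGve}|^{2}=|\BGve|^{2}-\tfrac1d(\Trc\BH)^{2}$ together with the polarization identities $|\BH\pm\BH^{t}|^{2}=2|\BH|^{2}\pm2\Trc(\BH^{2})$ and the definition $2J_2(\BH)=(\Trc\BH)^{2}-\Trc(\BH^{2})$ from \eqref{J2inv}, I would establish the exact algebraic identity
\[
W_{0}(\BH)=\Phi(\Trc\BH)+\frac{\mu}{4}|\BH-\BH^{t}|^{2}-2\mu J_2(\BH),
\]
with $\Phi$ as in \eqref{Fdef}. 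The point of this rearrangement is that the three summands are, respectively, a (generally nonconvex) scalar function of the \emph{linear} quantity $\Trc\BH$, a manifestly convex quadratic in the skew part of $\BH$, and a null-Lagrangian. Thus $W_{0}$ and its claimed envelope differ only by replacing $\Phi$ with $\Phi^{**}$.

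Write $G(\BH)$ for the claimed right-hand side of \eqref{ex:qcx}. The lower bound $QW_{0}\ge G$ is the soft direction. Here I would argue that $G$ is quasiconvex: $\Phi^{**}\circ\Trc$ is convex, being the composition of the convex function $\Phi^{**}$ with a linear map; the shear term is a positive-semidefinite quadratic form, hence convex; and $J_2$, being the sum of the principal $2\times2$ minors of the Jacobian, $J_2=\sum_{i<j}\partial(u_{i},u_{j})/\partial(x_{i},x_{j})$, is a null-Lagrangian and therefore quasiaffine. Since a finite sum of quasiconvex functions is quasiconvex, $G$ is quasiconvex, and $\Phi^{**}\le\Phi$ gives $G\le W_{0}$. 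As $QW_{0}$ is the largest quasiconvex function dominated by $W_{0}$ \cite{daco82}, this yields $G\le QW_{0}$.

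The upper bound $QW_{0}\le G$ is the crux and requires an explicit microstructure. Fixing $\BH$ and setting $\theta_{0}=\Trc\BH$, I would choose $\theta_{1},\theta_{2}$ and $\lambda\in[0,1]$ realizing the convex envelope, $\Phi^{**}(\theta_{0})=\lambda\Phi(\theta_{1})+(1-\lambda)\Phi(\theta_{2})$ with $\theta_{0}=\lambda\theta_{1}+(1-\lambda)\theta_{2}$ (the case $\Phi^{**}(\theta_{0})=\Phi(\theta_{0})$ being trivial), and then test $QW_{0}$ against the simple laminate oscillating with volume fractions $\lambda,1-\lambda$ between
\[
\BH_{1}=\BH-(1-\lambda)(\theta_{2}-\theta_{1})\,\tns{\Bn},\qquad \BH_{2}=\BH+\lambda(\theta_{2}-\theta_{1})\,\tns{\Bn},
\]
where $\Bn$ is an arbitrary unit vector. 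The jump $\BH_{2}-\BH_{1}=(\theta_{2}-\theta_{1})\,\tns{\Bn}$ is \emph{simultaneously} rank one and symmetric: rank-one compatibility makes the laminate an admissible gradient field and forces $\lambda J_2(\BH_{1})+(1-\lambda)J_2(\BH_{2})=J_2(\BH)$, while symmetry means $\BH_{1}$ and $\BH_{2}$ share the skew part of $\BH$, so the convex shear term is unchanged on both layers. Since the two traces are $\theta_{1},\theta_{2}$ by construction, the averaged energy collapses to $\Phi^{**}(\theta_{0})+\tfrac{\mu}{4}|\BH-\BH^{t}|^{2}-2\mu J_2(\BH)=G(\BH)$, giving $QW_{0}(\BH)\le G(\BH)$ and hence equality.

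The main obstacle, and indeed the whole content of the argument, is finding a lamination direction that relaxes $\Phi$ down to $\Phi^{**}$ at no cost in the remaining two terms; the direction $\tns{\Bn}$ works precisely because it is rank one (gradient-compatible, and affine for $J_2$ along the rank-one segment) and symmetric (invisible to the shear energy). This is what makes \emph{simple} first-order laminates optimal and obviates any recourse to higher-order lamination or to separate lower-bound estimates. The only points demanding genuine care are the bookkeeping in the opening identity and the confirmation that $J_2$ is quasiaffine in the full sense needed for quasiconvexity of $G$ (a standard property of Jacobian minors), not merely affine along rank-one lines; both are routine.
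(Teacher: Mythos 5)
Your proposal is correct and takes essentially the same route as the paper: the identical translation identity writing $W_{0}(\BH)=\Phi(\Trc\BH)+\frac{\mu}{4}|\BH-\BH^{t}|^{2}-2\mu J_2(\BH)$, a lower bound resting on convexity of the first two terms plus quasiaffinity of $J_2$ (the paper runs this through Jensen's inequality inside the periodic definition of $QW_{0}$ rather than through the largest-quasiconvex-minorant characterization, a purely cosmetic difference), and an upper bound via the same simple laminate whose jump $(\Gth_{2}-\Gth_{1})\tns{\Bn}$ is simultaneously rank-one and symmetric. The only notational difference is that the paper fixes $\Bn=\Be_{1}$ and writes the laminate through an explicit periodic $\BGf$ with $\Grad\BGf\in\{\BM_{1},\BM_{2}\}$, which is exactly your construction.
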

\begin{proof}
It will be convenient to start with the well known definition of quasiconvexification
of an arbitrary energy density function $W(\BH)$
using periodic functions \cite{daco82}:
\begin{equation}
  \label{qcxper}
  QW(\BH)=\inf_{\BGf-Q\text{-periodic}}\dashint_{Q}W(\BH+\Grad\BGf(\Bx))d\Bx,
\end{equation}
where $Q=[0,1]^{d}$.
The computation of $QW_{0}$ is based on
the following formula
\begin{equation}
\left|e(\Bu)-\nth{d}(\Div\Bu) \BI\right|^2=\frac{d-1}{d}(\Div\Bu)^2+
\nth{4}|\Grad\Bu-(\Grad\Bu)^{t}|^2-2J_2(\nabla\Bu).
\label{translation}
\end{equation}
 Let $\Bu(\Bx)=\BH\Bx+\BGf(\Bx)$. Then
\begin{equation}
\dashint_{Q}W_{0}(\Grad\Bu)d\Bx=
-2\mu J_2(\BH)+\dashint_{Q}\left(\Phi(\Div\Bu)+\frac{\mu}{4}|\Grad\Bu-(\Grad\Bu)^{t}|^2\right)d\Bx.
\label{enerform}
\end{equation}
Since $\Phi^{**}(\Gth)\le\Phi(\Gth)$ for all $\Gth\in\bb{R}$, we obtain the inequality
\[
\dashint_{Q}W_{0}(\Grad\Bu)d\Bx\ge -2\mu J_2(\BH)+\dashint_{Q}\left(\Phi^{**}(\Div\Bu)+
\frac{\mu}{2}|\Grad\Bu-(\Grad\Bu)^{t}|^2\right)d\Bx.
\]
Now applying the Jensen's inequality we get:
\begin{equation}
\dashint_{Q}W_{0}(\Grad\Bu)d\Bx\ge-2\mu J_2(\BH)+\Phi^{**}(\Trc\BH)+\frac{\mu}{4}|\BH-\BH^{t}|^{2}.
\label{ex:keyineq}
\end{equation}
Therefore,
\begin{equation}
  \label{QWlb}
QW_{0}(\BH)\ge -2\mu J_2(\BH)+\Phi^{**}(\Trc\BH)+\frac{\mu}{4}|\BH-\BH^{t}|^{2}.
\end{equation}
In order to prove equality, we need to exhibit a periodic function $\BGf$
such that $\Grad\BGf=(\Grad\BGf)^{t}$ and
\begin{equation}
  \label{bestphi}
  \dashint_{Q}\Phi(\Div\BGf(\Bz)+\Trc\BH)d\Bz=\Phi^{**}(\Trc\BH).
\end{equation}
\begin{figure}
\begin{center}
\includegraphics[scale=0.4]{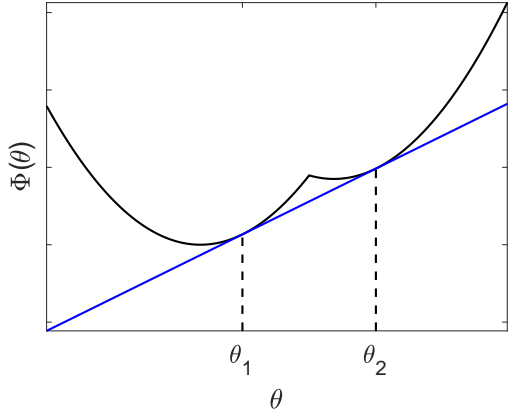}
\end{center}
\caption{Definition of $\Gth_{1}<\Gth_{2}$ via the common tangent construction.}
\label{fig:Phitheta}
\end{figure}
If $\Phi(\Gth)$, given by (\ref{Fdef}) is convex\footnote{This is never the case for the potential (\ref{linisel}).}, then the original energy $W_{0}(\BH)$ is quasiconvex (polyconvex, actually), and formula (\ref{ex:qcx}) holds. The case of interest is therefore when $\Phi(\Gth)$ retains the double-well shape,
in which case $\Phi^{**}(\Gth)=\Phi(\Gth)$ \IFF
$\Gth\not\in(\Gth_{1},\Gth_{2})$, as in Fig.~\ref{fig:Phitheta}. For example, for the potential (\ref{linisel}) we find
\begin{equation}
  \label{th12}
  \Gth_{1}=\hf\left(\frac{f_{0}}{\Gk_{0}\Gth_{p}}+\frac{(d-1)\mu\Gth_{p}}{d\Gk_{0}+(d-1)\mu}\right),\qquad
  \Gth_{2}=\Gth_{p}+\hf\left(\frac{f_{0}}{\Gk_{0}\Gth_{p}}-\frac{(d-1)\mu\Gth_{p}}{d\Gk_{0}+(d-1)\mu}\right).
\end{equation}
Thus, if $\Trc\BH\not\in(\Gth_{1},\Gth_{2})$, then $\BGf=\Bzr$ attains equality in
(\ref{ex:keyineq}). 
If $\Trc\BH\in(\Gth_{1},\Gth_{2})$, then there exists $\Go\in(0,1)$ such that
\begin{equation}
\Trc\BH=\Go\Gth_1+(1-\Go)\Gth_2.
\label{lever_rule}
\end{equation}
In order to attain equality in (\ref{ex:keyineq}) we need to split the period
cell $Q=[0,1]^{d}$ into two subsets $A_{1}$, of volume $\Go$ and
$A_{2}$ of volume $1-\Go$ such that
\begin{equation}
  \label{peratt}
\Div\Bu(\Bx)=\Gth_{1}\chi_{A_{1}}(\Bx)+\Gth_{2}\chi_{A_{2}}(\Bx)
\end{equation}
for all $\Bx\in Q$. This is easily achieved with a laminate construction with
$A_{1}=[0,\Go]\times[0,1]^{2}$ and $A_{2}=[1-\Go,1]\times[0,1]^{2}$ and
function $\Grad\BGf$ being symmetric and piecewise constant:
\[
\Grad\BGf(\Bx)=\BM_{1}\chi_{A_{1}}(\Bx)+\BM_{2}\chi_{A_{2}}(\Bx).
\]
The symmetric matrices $\BM_{1}$ and $\BM_{2}$ are easily found from the periodicity
of $\BGf(\Bx)$, that is equivalent to $\dashint_{Q}\Grad\BGf(\Bx)dx=\Bzr$, and (\ref{peratt}):
\[
\BM_{1}=(1-\Go)\jump{\Gth}\tns{\Be_{1}},\qquad\BM_{2}=-\Go\jump{\Gth}\tns{\Be_{1}},
\]
where $\jump{\Gth}=\Gth_{1}-\Gth_{2}$. These choices guarantee equality in
(\ref{bestphi}).
\end{proof}

\smallskip

\begin{corollary}
  \begin{equation}
\BCA=\{\BH\in\bb{M}: \Trc\BH\not\in(\Gth_{1},\Gth_{2})\}.
\label{ex:binodal}
\end{equation}
\end{corollary}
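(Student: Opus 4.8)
The plan is to read off the corollary directly from the comparison of two formulas for the same energy, so that the set $\BCA=\{\BH\in\bb{M}:W_{0}(\BH)=QW_{0}(\BH)\}$ becomes visibly governed by the single scalar $\Trc\BH$. First I would rewrite the unrelaxed density $W_{0}(\BH)$ in exactly the algebraic shape that $QW_{0}(\BH)$ takes in Theorem~\ref{th:exqcx}. Since $\Trc\BGve=\Trc\BH$, applying the translation identity (\ref{translation}) with $\Grad\Bu=\BH$ to the deviatoric term gives
\[
\mu\left|\dev{\BGve}\right|^{2}=\mu\frac{d-1}{d}(\Trc\BH)^{2}+\frac{\mu}{4}|\BH-\BH^{t}|^{2}-2\mu J_2(\BH),
\]
and recalling the definition (\ref{Fdef}) of $\Phi$ this collapses into
\[
W_{0}(\BH)=\Phi(\Trc\BH)+\frac{\mu}{4}|\BH-\BH^{t}|^{2}-2\mu J_2(\BH).
\]

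The key observation is then immediate: subtracting this from the expression (\ref{ex:qcx}) for $QW_{0}(\BH)$, the skew and null-Lagrangian terms cancel identically, leaving
\[
W_{0}(\BH)-QW_{0}(\BH)=\Phi(\Trc\BH)-\Phi^{**}(\Trc\BH).
\]
Because the convexification satisfies $\Phi^{**}\le\Phi$ everywhere, the membership condition $W_{0}(\BH)=QW_{0}(\BH)$ is therefore equivalent to the purely scalar equality $\Phi(\Trc\BH)=\Phi^{**}(\Trc\BH)$, with no further constraint on the skew part or on the trace-free symmetric part of $\BH$.

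Finally I would invoke the characterization of the coincidence set of $\Phi$ and $\Phi^{**}$ established in the proof of Theorem~\ref{th:exqcx}: when $\Phi$ retains its double-well shape one has $\Phi^{**}(\Gth)=\Phi(\Gth)$ if and only if $\Gth\not\in(\Gth_{1},\Gth_{2})$ (Fig.~\ref{fig:Phitheta}), the endpoints being fixed by the common-tangent construction (\ref{th12}). Combining this with the equivalence just derived yields $\BCA=\{\BH\in\bb{M}:\Trc\BH\not\in(\Gth_{1},\Gth_{2})\}$. I do not anticipate a genuine obstacle here; the only point requiring care is the bookkeeping in the first step — verifying that the translation identity reproduces precisely the skew-plus-null-Lagrangian combination appearing in (\ref{ex:qcx}), so that their exact cancellation leaves a difference depending on $\BH$ solely through $\Trc\BH$.
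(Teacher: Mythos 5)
Your proposal is correct and takes essentially the same route as the paper: the pointwise identity $W_{0}(\BH)=\Phi(\Trc\BH)+\frac{\mu}{4}|\BH-\BH^{t}|^{2}-2\mu J_{2}(\BH)$ that you obtain from the translation formula (\ref{translation}) is exactly the paper's equation (\ref{enerform}) specialized to $\BGf=\Bzr$, and the corollary then reduces, just as in the paper, to the scalar coincidence set $\{\Phi=\Phi^{**}\}=\bb{R}\setminus(\Gth_{1},\Gth_{2})$. The paper's subsequent discussion via jump sets and Lemma~\ref{lem:r1line} is offered only as an alternative that avoids computing $QW_{0}$ explicitly, so your direct subtraction argument matches the intended primary proof.
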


  The constructed energy density $QW_{0}(\BH)$ is obviously polyconvex, and coincides with $RW_{0}(\BH)$, the
  rank-1 convexification of $W_{0}(\BH)$. Indeed, as we see from the proof of
  Theorem~\ref{th:exqcx}, the infimum in (\ref{qcxper}) can be achieved by a
  simple laminate. In view of the simplicity of the optimal microstructures,
  one can \emph{post factum} conclude that computing $QW_{0}(\BH)$ explicitly was
  not really necessary for the characterization of the set $\BCA$.  We could
  also delineate it by using inequality (\ref{ex:keyineq}) and relying on
  the knowledge and properties of the jump set introduced in
  \cite{grtrpe,grtrnc}. Indeed, inequality (\ref{ex:keyineq}) implies that all
  $\BH$ satisfying $\Phi^{**}(\Trc\BH)=\Phi(\Trc\BH)$ are admissible,
  i.e. $\{\BH\in\bb{M}: \Trc\BH\not\in(\Gth_{1},\Gth_{2})\}\subset\BCA$. We
  can then show that all $\BH$ with $\Gth_{1}<\Trc\BH<\Gth_{2}$ fail rank-one
  convexity by looking for pairs $\BH_{\pm}$ of the displacement gradient
  values that can occur on the two sides of a smooth jump discontinuity
  representing a phase boundary. We recall, \cite{grtrpe}, that these matrices
  satisfy the equations
\begin{equation}
  \label{js}
  \jump{\BH}=\Ba\otimes\Bn,\quad\jump{W_{\BH}}\Bn=\Bzr,\quad\jump{W_{\BH}}^{t}\Ba=\Bzr,\quad
  \jump{W}=\av{\lump{W_{\BH}},\jump{\BH}},
\end{equation}
where
\[
  \jump{\BH}=\BH_{+}-\BH_{-},\qquad\lump{W_{\BH}}=\hf(W_{\BH}(\BH_{+})+W_{\BH}(\BH_{-})).
\]
We have shown in \cite{grtrpcx} that the jump set for the energy
(\ref{ex:ener}) consists of the union of two hyperplanes
\[
\mathfrak{J}_{-}=\{\BH\in\bb{M}:\Trc\BH=\Gth_{1}\},\qquad
\mathfrak{J}_{+}=\{\BH\in\bb{M}:\Trc\BH=\Gth_{2}\},
\]
so that if $\Trc\BH_{-}=\Gth_{1}$, then the set of corresponding
$\BH_{+}$ is a projective plane worth of points
\[
\BH_{+}=\BH_{-}+(\Gth_{2}-\Gth_{1})\tns{\Bn},\quad|\Bn|=1.
\]
As we have already remarked, inequality (\ref{ex:keyineq}) implies that
$\mathfrak{J}_{\pm}\subset\CA$. 

 The lemma  formulated below,
which is valid for general energy densities $W(\BF)$, shows that
our results from \cite{grtrnc} imply that all matrices
$\BH_{t}=t\BH_{+}+(1-t)\BH_{-}$ would also fail to be rank-one convex for all
$t\in(0,1)$, proving equality (\ref{ex:binodal}), as the line segments
$\BH_{t}$ cover all points $\BH$, satisfying $\Gth_{1}<\Trc\BH<\Gth_{2}$.
\begin{lemma}
  \label{lem:r1line}
Suppose $\BH_{\pm}$ is a corresponding pair of points on the jump set $\mathfrak{J}$ of
the energy density function $W(\BH)$. Suppose that
\begin{enumerate}
\item[(i)] $W(\BH)\ge W(\BH_{-})+\av{\BP_{-},\BH-\BH_{-}}$
    for any matrix $\BH$, such that $\rank(\BH-\BH_{-})=1$, where
    $\BP_{-}=W_{\BH}(\BH_{-})$;
\item[(ii)] None of the matrices $\BH_{t}=t\BH_{+}+(1-t)\BH_{-}$, $t\in(0,1)$
  satisfies the jump set equations (\ref{js}) with $\BH_{-}$ as the
  corresponding pair.
\end{enumerate}
Then all of the matrices $\BH_{t}$, $t\in(0,1)$ must fail rank-one convexity
in the sense that they must fail to satisfy the inequality
\begin{equation}
  \label{Weiert}
  W(\BH)\ge W(\BH_{t})+\av{\BP_{t},\BH-\BH_{t}}\quad\forall\BH\colon\;\rank(\BH-\BH_{t})=1,
\end{equation}
where $\BP_{t}=W_{\BH}(\BH_{t})$.
\end{lemma}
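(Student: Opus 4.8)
The plan is to argue by contradiction: assuming that some $\BH_{\bar{t}}$ with $\bar{t}\in(0,1)$ \emph{does} satisfy the Weierstrass inequality (\ref{Weiert}), I will show that the pair $(\BH_-,\BH_{\bar{t}})$ satisfies all four jump-set equations (\ref{js}), contradicting hypothesis (ii). The first equation of (\ref{js}) is immediate, since $\BH_{\bar{t}}-\BH_-=\bar{t}\,\Ba\otimes\Bn$ is rank one; the work therefore concentrates on the two traction conditions $(\BP_{\bar{t}}-\BP_-)\Bn=\Bzr$ and $(\BP_{\bar{t}}-\BP_-)^{t}\Ba=\Bzr$ together with the energy balance, where $\BP_{\bar{t}}=W_{\BH}(\BH_{\bar{t}})$.

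The central device is to freeze the rank-one directions and reduce to finite-dimensional supporting-plane geometry. First I would introduce $F(\Bc)=W(\BH_-+\Bc\otimes\Bn)$ as a function of $\Bc\in\bb{R}^{d}$, noting $\nabla F(\Bc)=W_{\BH}(\BH_-+\Bc\otimes\Bn)\Bn$. Hypothesis (i), restricted to perturbations of the form $\Bc\otimes\Bn$, says precisely that $F$ lies above its tangent plane at $\Bc=\Bzr$. The corresponding-pair equations for $(\BH_-,\BH_+)$ translate, through the traction condition $(\BP_+-\BP_-)\Bn=\Bzr$ and the energy balance $\jump{W}=\av{\lump{W_{\BH}},\jump{\BH}}$, into the statement that this \emph{same} tangent plane $P_0(\Bc)=F(\Bzr)+\nabla F(\Bzr)\cdot\Bc$ also touches $F$ at $\Bc=\Ba$ (recall $\BH_+=\BH_-+\Ba\otimes\Bn$); that is, $P_0$ is a common supporting plane for $F$ with contact at $\Bzr$ and at $\Ba$. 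Meanwhile the assumed inequality (\ref{Weiert}) at $\BH_{\bar{t}}$, restricted to the same family, says $F$ lies above its tangent plane $P_{\bar{t}}$ at the interior point $\bar{t}\Ba$.

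The key step is then a sandwich argument. Since $F\ge P_{\bar{t}}$ everywhere while $F=P_0$ at $\Bzr$ and $\Ba$, the affine function $P_0-P_{\bar{t}}$ is nonnegative at $\Bzr$ and at $\Ba$; evaluating it along the segment at the convex combination $\bar{t}\Ba$, and using $F(\bar{t}\Ba)=P_{\bar{t}}(\bar{t}\Ba)\ge P_0(\bar{t}\Ba)$, forces $F(\bar{t}\Ba)=P_0(\bar{t}\Ba)$. Hence $\bar{t}\Ba$ is a global minimum of the nonnegative function $F-P_0$, so $\nabla F(\bar{t}\Ba)=\nabla F(\Bzr)$, i.e. $(\BP_{\bar{t}}-\BP_-)\Bn=\Bzr$. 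Repeating the construction with $G(\Bm)=W(\BH_-+\Ba\otimes\Bm)$, for which $\nabla G(\Bm)=W_{\BH}(\BH_-+\Ba\otimes\Bm)^{t}\Ba$ and the contact points become $\Bzr$ and $\Bn$, yields the second traction condition $(\BP_{\bar{t}}-\BP_-)^{t}\Ba=\Bzr$. The energy balance for $(\BH_-,\BH_{\bar{t}})$ then follows by combining the contact identity $F(\bar{t}\Ba)=P_0(\bar{t}\Ba)$, which gives $\jump{W}=\bar{t}\,\nabla F(\Bzr)\cdot\Ba$, with the just-proved traction match, so that $\av{\lump{W_{\BH}},\jump{\BH}}$ collapses to the same value; all four equations (\ref{js}) then hold, contradicting (ii).

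I expect the main obstacle to be precisely the upgrade from scalar matching to the full vector traction conditions. Restricting $F$ to the line $\bb{R}\Ba$ through the segment $\BH_t$ only convexifies a single scalar function and yields nothing more than the $\Ba$-component of $(\BP_{\bar{t}}-\BP_-)\Bn$; recovering all components forces the genuinely $d$-dimensional picture for $F$ and $G$. The delicate point is to verify that the corresponding-pair equations really do promote the tangent plane at one well into a \emph{common} supporting plane with contact at both wells, since it is this double contact, propagated to the interior contact point $\bar{t}\Ba$, that converts a touching into the desired gradient identity.
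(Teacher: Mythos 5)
Your proposal is correct, and every step checks out: the double contact of $P_0$ at $\Bzr$ and $\Ba$ does follow from the traction condition $(\BP_{+}-\BP_{-})\Bn=\Bzr$ combined with the Maxwell relation (together they give $W(\BH_{+})-W(\BH_{-})=\BP_{-}\Bn\cdot\Ba$), the affine sandwich legitimately forces $F(\bar{t}\Ba)=P_0(\bar{t}\Ba)$, and since $F-P_0$ is differentiable at its interior global minimum $\bar{t}\Ba$ (differentiability of $W$ at $\BH_{\bar{t}}$ is implicit in the lemma, as $\BP_{t}$ must exist), the first-derivative test yields the full vector identity $(\BP_{\bar{t}}-\BP_{-})\Bn=\Bzr$; the $G$-construction and the energy balance then go through exactly as you describe, so all four equations of (\ref{js}) hold for the pair $(\BH_{-},\BH_{\bar{t}})$, contradicting (ii). However, your route is genuinely different from the paper's in its key technical step. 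The paper also argues by contradiction, but it stays on the one-dimensional segment: combining (i) and (\ref{Weiert}) along the rank-one line and using (\ref{js}) for $(\BH_{-},\BH_{+})$, it derives the phase-interchange stability inequality $\av{\BP_{t}-\BP_{-},\BH_{t}-\BH_{-}}\le 0$, i.e.\ inequality (\ref{phii}), and then invokes the external result \cite[Lemma~4.1]{grtrnc}, which asserts that a rank-one related, locally stable pair satisfying this inequality must satisfy the jump-set equations. In other words, the paper outsources precisely the step you prove by hand --- the upgrade from a scalar inequality along the segment to the full vector traction conditions --- to a cited lemma, while you re-derive it via the $d$-dimensional supporting-plane geometry on the rank-one cones $\Bc\otimes\Bn$ and $\Ba\otimes\Bm$; this is also the delicate point you correctly flagged as the main obstacle. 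The paper's version buys brevity and a conceptual link to the phase-interchange stability condition studied in \cite{grtrnc}; yours buys a self-contained argument whose only inputs are the hypotheses of the lemma, which is arguably preferable for a reader without access to that reference.
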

\begin{proof}
  Suppose, with the goal of getting a contradiction, that (\ref{Weiert}) holds. Then, since matrices
  $\BH_{-}$, $\BH_{t}$ and $\BH_{+}$ all lie on the same rank-one line, we obtain
\[
W(\BH_{-})+t\av{\BP_{-},\jump{\BH}}\le W(\BH_{t})\le
W(\BH_{+})-(1-t)\av{\BP_{t},\jump{\BH}}.
\]
Now, using the jump set equations (\ref{js}), we obtain
\[
(1-t)(\av{\BP_{-},\jump{\BH}}-\av{\BP_{t},\jump{\BH}})\ge 0.
\]
Using the fact that $\BH_{t}-\BH_{-}=t\jump{\BH}$ we obtain
\begin{equation}
  \label{phii}
  \av{\BP_{t}-\BP_{-},\BH_{t}-\BH_{-}}\le 0.
\end{equation}
The variational significance of inequality (\ref{phii}), called the phase
interchange stability inequality, was understood in \cite{grtrnc}, where,
according to \cite[Lemma~4.1]{grtrnc}, the pair of rank-one related matrices
$\BH_{-}$ and $\BH_{t}$ would have to satisfy the jump set equations
(\ref{js}), in contradiction with the assumption (ii) of the lemma. Thus, the
assumption that (\ref{Weiert}) holds cannot be true, and all matrices
$\BH_{t}$, $t\in(0,1)$, must fail to be rank-one convex.
\end{proof}

\begin{remark} The energy density $QW_{0}(\BH)=RW_{0}(\BH)$ is non-convex as it has a
  double-well shape along the multiples of $\BI$, specifically,
  $QW_{0}(\Gve\BI)=\Phi^{**}(d\Gve)-d(d-1)\mu\Gve^{2}$. This is clear from the fact
  that its bulk modulus is negative in the interval
  $\Gth\in(\Gth_{1},\Gth_{2})$, where $\Gth_{1}$ and $\Gth_{2}$ are given by
  (\ref{th12}) for the bi-quadratic material model (\ref{linisel}). Moreover,
  the value of this modulus is constant and equal to $-2(d-1)\mu/d$,
  cf. \cite{buhala83}, which corresponds exactly to the threshold for the loss
  of strong ellipticity of the equilibrium equations (saturation of the
  Legendre-Hadamard conditions). This is associated with the generic
  degeneration of the acoustic tensor along rank one envelopes as we explain
  in our Appendix~\ref{sec:ac}.  The vanishing of the velocity of the
  longitudinal waves in our example turns the phase-transforming material into
  an elastic \ae ther \cite{erto56}.  
\end{remark}

We are now ready to show that for a class of energies $W(\BH)$ to which our example
belongs all strong local minimizers in a hard device (Dirichlet \bc s) must be global. The first observation, leading to this result is
that any two energies $W$ and $W'$ that have one and the same quasiconvex
envelope are equivalent as far as existence of metastable states are
concerned. This is because for any metastable state $\Bu$
\[
\int_{\GO}W(\Grad\Bu)d\Bx=\int_{\GO}QW(\Grad\Bu)d\Bx.
\]
Thus, any metastable configuration for $W$ is also metastable for $QW$. Theorem~\ref{th:exqcx} shows that our energy has the property
\begin{equation}
  \label{translcalss}
  QW(\BH)=N(\BH)+C(\BH),
\end{equation}
where $N(\BH)$ is a null-Lagrangian and $C(\BH)$ is convex and $C^{1}$ smooth.
\begin{theorem}
  \label{th:nometa}
Let $W(\BH)$ be the energy density satisfying (\ref{translcalss}). Then any Lipschitz strong local minimizer of $\int_{\GO}W(\Grad\Bu)d\Bx$ with prescribed Dirichlet \bc s is a global minimizer.
\end{theorem}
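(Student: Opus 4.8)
The plan is to compare the given functional $\CE(\Bu)=\int_{\GO}W(\Grad\Bu)\,d\Bx$ with its relaxation $\CG(\Bu)=\int_{\GO}QW(\Grad\Bu)\,d\Bx$, and then to use the splitting (\ref{translcalss}) to turn the relaxed problem into a convex one. First I would invoke the necessary condition for strong local minimality recorded above: any Lipschitz strong local minimizer $\Bu$ satisfies $\Grad\Bu(\Bx)\in\BCA$ for a.e.\ $\Bx\in\GO$ (see \cite{tahe05} and (\ref{admset})), so that $W(\Grad\Bu)=QW(\Grad\Bu)$ a.e.\ and hence $\CE(\Bu)=\CG(\Bu)$. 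Since $QW\le W$ pointwise, every admissible competitor $\Bv$ carrying the boundary data (\ref{Dbc}) obeys $\CE(\Bv)\ge\CG(\Bv)$. It therefore suffices to prove that $\Bu$ is a \emph{global} minimizer of $\CG$ among admissible fields, because then $\CE(\Bv)\ge\CG(\Bv)\ge\CG(\Bu)=\CE(\Bu)$.

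Because $N$ is a null-Lagrangian, $\int_{\GO}N(\Grad\Bv)\,d\Bx$ takes one and the same value for all $\Bv$ sharing the data (\ref{Dbc}); minimizing $\CG$ over admissible fields is thus equivalent to minimizing the convex functional $\Bv\mapsto\int_{\GO}C(\Grad\Bv)\,d\Bx$. Here I would use the supporting-hyperplane inequality for the convex, $C^{1}$ density $C$,
\[
\int_{\GO}C(\Grad\Bv)\,d\Bx-\int_{\GO}C(\Grad\Bu)\,d\Bx\ge\int_{\GO}\av{C_{\BH}(\Grad\Bu),\Grad(\Bv-\Bu)}\,d\Bx ,
\]
so that global minimality of $\Bu$ for $\CG$ reduces to showing that the right-hand side vanishes, i.e.\ that $\Bu$ is \emph{stationary} for the functional $\Bv\mapsto\int_{\GO}C(\Grad\Bv)\,d\Bx$.

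Establishing this stationarity is the heart of the matter. A strong local minimizer annihilates the first variation of $\CE$: for every $\BGf\in W^{1,\infty}_{0}(\GO;\bb{R}^{d})$ the fields $\Bu+s\BGf$ are admissible and lie in the strong neighbourhood for small $s$, so $s\mapsto\CE(\Bu+s\BGf)$ is minimized at $s=0$. The subtlety is that $W$ need not be differentiable everywhere---for the bi-quadratic $f$ of (\ref{linisel}) it has a corner---so one must check that this is harmless. The key observation is that on the contact set the nonnegative function $W-QW$ vanishes while $QW=N+C$ is $C^{1}$; therefore at a.e.\ point with $\Grad\Bu(\Bx)\in\BCA$ the energy $W$ is differentiable and $W_{\BH}(\Grad\Bu)=QW_{\BH}(\Grad\Bu)$ (the graph of $W$ touches the smooth graph of $QW$ from above, forcing equal gradients). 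Since the corner of $f$ lives at $\Trc\BH\in(\Gth_{1},\Gth_{2})$, i.e.\ \emph{outside} $\BCA$, the required differentiability is available precisely where it is used. Differentiating under the integral sign---legitimate because $\Grad\Bu,\Grad\BGf\in L^{\infty}$ and $W$ is locally Lipschitz---then gives $0=\frac{d}{ds}\CE(\Bu+s\BGf)\big|_{s=0}=\int_{\GO}\av{QW_{\BH}(\Grad\Bu),\Grad\BGf}\,d\Bx$.

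Finally I would peel off the null-Lagrangian: since the Euler--Lagrange operator of $N$ vanishes identically, $\int_{\GO}\av{N_{\BH}(\Grad\Bu),\Grad\BGf}\,d\Bx=0$ for every $\BGf$ vanishing on $\dOm$, whence $\int_{\GO}\av{C_{\BH}(\Grad\Bu),\Grad\BGf}\,d\Bx=0$ for all such $\BGf$. Choosing $\BGf=\Bv-\Bu$ delivers exactly the stationarity required above, so $\CG(\Bv)\ge\CG(\Bu)$ for every admissible $\Bv$, and the chain $\CE(\Bv)\ge\CG(\Bv)\ge\CG(\Bu)=\CE(\Bu)$ closes the proof. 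I expect the only genuine difficulties to be the tangency identity $W_{\BH}=QW_{\BH}$ on $\BCA$ and the justification that a \emph{strong} (rather than weak) local minimizer satisfies the weak Euler--Lagrange equation; once differentiability of $W$ along $\BCA$ is secured, the remaining steps are soft.
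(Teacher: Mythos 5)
Your proof is correct and follows essentially the same route as the paper's: the necessary condition $\Grad\Bu\in\BCA$, the splitting $QW=N+C$, the tangency identity $W_{\BH}(\Grad\Bu)=QW_{\BH}(\Grad\Bu)$ on $\BCA$, null-Lagrangian invariance of $\int_{\GO}N(\Grad\Bv)\,d\Bx$, and the convexity/integration-by-parts step, closing with $\CE(\Bv)\ge\CG(\Bv)\ge\CG(\Bu)=\CE(\Bu)$. The only difference is that you spell out details the paper leaves implicit---deriving the weak Euler--Lagrange equation directly from strong local minimality and flagging where differentiability of $W$ on $\BCA$ is needed---which only strengthens the argument.
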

\begin{proof}  
Let $\Bv$ be a Lipschitz competitor
that agrees with $\Bu$ on $\dOm$. Then
\begin{equation}
  \label{cx}
  \int_{\GO}N(\Grad\Bv)d\Bx=\int_{\GO}N(\Grad\Bu)d\Bx,\qquad
C(\Grad\Bv)\ge C(\Grad\Bu)+\av{C_{\BF}(\Grad\Bu),\Grad(\Bv-\Bu)}.
\end{equation}
Since $\Bu$ is an equilibrium we have
\[
0=\Div W_{\BF}(\Grad\Bu)=\Div QW_{\BF}(\Grad\Bu)=\Div N_{\BF}(\Grad\Bu)+\Div C_{\BF}(\Grad\Bu)=\Div
C_{\BF}(\Grad\Bu).
\]
The equality of $W_{\BF}(\BH)=QW_{\BF}(\BH)$ for any $\BH\in\BCA$ follows from the fact that the function $W(\BH)-QW(\BH)$ is nonnegative, $C^{1}$ smooth and attains its minimum value of 0 at all $\BF\in\BCA$.
Now, the integration by parts in the second term on the \rhs\ in the inequality in (\ref{cx}) implies
$\int_{\GO}C(\Grad\Bv)d\Bx\ge \int_{\GO}C(\Grad\Bu)d\Bx$. It follows that
\[
\int_{\GO}QW(\Grad\Bv)d\Bx\ge\int_{\GO}QW(\Grad\Bu)d\Bx.
\]
\end{proof}
\begin{remark}
An important observation was made by Sivaloganathan and Spector in \cite{sisp18} that $C(\BH)$
does not need to be convex for inequality in (\ref{cx}) to hold. In fact, as the authors show, it is only
required that $C^{**}(\Grad\Bu(\Bx))=C(\Grad\Bu(\Bx))$ for all $\Bx\in\GO$,
i.e. $C(\BF)$ agrees with its convex hull at all values of $\Grad\Bu$.
\end{remark}

\section{Global minimizers}
\setcounter{equation}{0}
\label{sec:necsuf}
 In this section we temporarily switch to a more general
  setting.  For instance, we assume that  the ``deformation'' $\By$ is a vector field
  $\By:\GO\to\bb{R}^{m}$. In fact, the analysis presented below  does not  depend on either   
  dimension  of $\Bx$ space or the dimension of  $\By$ space. It also does not depend  on the specific form of $W(\BF)$.
  
Consider the problem of attainment in the definition of the quasiconvex
envelope
\begin{equation}
  \label{qcx}
  QW(\BF)=\inf_{\By\in\BF\Bx+W_{0}^{1,\infty}(\GO;\bb{R}^{m})}\dashint_{\GO}W(\Grad\By)d\Bx,
\end{equation}
where $\GO\subset\bb{R}^{d}$ is a star-shaped Lipschitz domain. Below we  show that Lipschitz equilibrium configurations for (\ref{qcx}) must be global minimizers, provided they satisfy the necessary conditions for metastability and are sufficiently regular near $\dOm$.

Recall first  the well-known necessary conditions for Lipschitz strong
local minima of variational functionals. 

The first classical necessary condition  is the
Euler-Lagrange equation
\begin{equation}
  \label{EL}
  \Div\BP(\Grad\By)=\Bzr,\qquad\BP(\BF)=W_{\BF}(\BF),
\end{equation}
which even weak local minimizers have to satisfy. The second, is
the Noether   equation
\begin{equation}
  \label{Noether}
  \Div\BP^{*}(\Grad\By)=\Bzr,\qquad \BP^{*}(\BF)=W(\BF)\BI_{d}-\BF^{t}\BP(\BF).
\end{equation}
We recall that if
$\By(\Bx)$ is of class $C^{2}$ then (\ref{Noether}) is a consequence
of (\ref{EL}), while  there are Lipschitz
configurations satisfying (\ref{EL}), but not (\ref{Noether}). For instance, the Maxwell relation, which is the last equation in (\ref{js}), is a consequence of  (\ref{Noether}), but not of (\ref{EL}).
\begin{definition}
  \label{def:station}
A configuration $\By\in W^{1,\infty}(\GO;\bb{R}^{m})$ is called stationary   
if it satisfies both (\ref{EL}) and (\ref{Noether}).
\end{definition}

The second  classical necessary condition is quasiconvexity $\Grad\By\in\BCA$ for a.e. $\Bx\in\GO$, where $\BCA$ is defined in (\ref{admset}). It was shown in \cite{grtrnc} that for configurations, whose only singularities are smooth phase boundaries the combination of (\ref{EL}) and quasiconvexity implies stationarity. In general, however, an example in \cite{krta03}  shows that there could be weak
local minimizers for strictly quasiconvex energies that are not strong local minimizers; most probably, the configuration in \cite{krta03} fails  stationarity.

It turns out that in the case affine \bc s in a star-shaped domain, the 
 two necessary conditions formulated above are sufficient for $\By(\Bx)$ to be a global
minimizer, provided we assume its regularity near the boundary of the domain.
\begin{theorem}
  \label{th:ELA}
Let $\GO\subset{\bb{R}^{d}}$ be a star-shaped domain with a Lipschitz boundary. Assume that
\begin{itemize}
\item[(i)] $\By:\GO\to\bb{R}^{m}$ is Lipschitz continuous,
\item[(ii)] $\By$ solves (\ref{EL}) and (\ref{Noether}) in the sense of distributions,
\item[(iii)] $\Grad\By(\Bx)$ is locally stable for a.e. $\Bx\in\GO$,
\item[(iv)] $\By(\Bx)=\BF\Bx$ for all $\Bx\in\dOm$,
\item[(v)] There exists $\Ge>0$, such that $\By\in C^{1}(\bra{\GO_{\Ge}};\bb{R}^{m})$, 
where $\GO_{\Ge}=\{\Bx\in\GO:{\rm dist}(\Bx,\dOm)<\Ge\}$. 
\end{itemize}
Then $\By(\Bx)$ is the global minimizer in (\ref{qcx}), i.e.
\begin{equation}
  \label{QW}
  \int_{\GO}W(\Grad\By)d\Bx=|\GO|QW(\BF).
\end{equation}
\end{theorem}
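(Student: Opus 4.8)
The plan is to sandwich $\int_\GO W(\Grad\By)\,d\Bx$ between two copies of $|\GO|\,QW(\BF)$. The lower bound $\int_\GO W(\Grad\By)\,d\Bx\ge|\GO|\,QW(\BF)$ is immediate: since $\By=\BF\Bx$ on $\dOm$ by (iv), the map $\By$ is an admissible competitor in (\ref{qcx}), so this is just the defining inequality for the quasiconvex envelope. (Equivalently, local stability (iii) gives $W(\Grad\By)=QW(\Grad\By)$ a.e., and Jensen's inequality for the quasiconvex $QW$ under affine data yields the same bound.) All the work is in the reverse inequality, and this is where stationarity and star-shapedness enter.

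For the upper bound I would establish a nonlinear Clapeyron identity by combining the two conservation laws. Multiplying (\ref{EL}) by $\By$ and integrating by parts gives
\[
\int_\GO\av{\Grad\By,\BP(\Grad\By)}\,d\Bx=\int_{\dOm}\By\cdot\BP(\Grad\By)\Bn\,dS,
\]
while testing (\ref{Noether}) against the dilation field $\Bx$ (a Pohozaev-type computation using $\Trc\BP^{*}=dW-\av{\Grad\By,\BP}$) gives
\[
\int_\GO\big(dW(\Grad\By)-\av{\Grad\By,\BP(\Grad\By)}\big)\,d\Bx=\int_{\dOm}\big[W(\Grad\By)(\Bx\cdot\Bn)-(\Grad\By\,\Bx)\cdot\BP(\Grad\By)\Bn\big]\,dS.
\]
Adding the two identities cancels the bulk term $\av{\Grad\By,\BP}$ and leaves $d\int_\GO W(\Grad\By)\,d\Bx$ on the left. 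On $\dOm$ the affine condition (iv) together with the regularity (v) forces $\Grad\By=\BF+\Bb\otimes\Bn$ (the tangential part of $\Grad\By$ is pinned to $\BF$), so that $\By\cdot\BP\Bn=(\BF\Bx)\cdot\BP\Bn$ and $(\Grad\By\,\Bx)\cdot\BP\Bn=(\BF\Bx)\cdot\BP\Bn+(\Bx\cdot\Bn)\,\av{\Grad\By-\BF,\BP(\Grad\By)}$. The affine traction terms cancel and I obtain the clean formula
\[
d\int_\GO W(\Grad\By)\,d\Bx=\int_{\dOm}(\Bx\cdot\Bn)\,\psi(\Grad\By)\,dS,\qquad \psi(\BG):=W(\BG)-\av{\BG-\BF,\BP(\BG)}.
\]

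It then remains to bound the boundary integrand by $QW(\BF)$ pointwise. Local stability (iii) gives $\Grad\By\in\BCA$ a.e., hence, by continuity up to $\dOm$ furnished by (v), the boundary gradient $\BG$ lies in $\BCA$; thus $W(\BG)=QW(\BG)$ and, since $W-QW\ge0$ is $C^{1}$ and vanishes on $\BCA$, also $\BP(\BG)=W_{\BF}(\BG)=QW_{\BF}(\BG)$ (the contact-set gradient matching already used in Theorem~\ref{th:nometa}). Because $\BG-\BF=\Bb\otimes\Bn$ is rank one and $QW$ is rank-one convex, the supporting-line inequality along this direction gives
\[
\psi(\BG)=QW(\BG)-\av{\BG-\BF,QW_{\BF}(\BG)}\le QW(\BF).
\]
Finally star-shapedness (we may take the star center to be the origin) gives $\Bx\cdot\Bn\ge0$ on $\dOm$, so the Clapeyron formula yields $d\int_\GO W(\Grad\By)\,d\Bx\le QW(\BF)\int_{\dOm}(\Bx\cdot\Bn)\,dS=d\,|\GO|\,QW(\BF)$, the desired upper bound, which combined with the lower bound proves (\ref{QW}).

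The main obstacle is the rigor of the two integration-by-parts identities: $\By$ is only Lipschitz in the interior and (\ref{EL}), (\ref{Noether}) hold merely distributionally, so the boundary integrals are a priori meaningless. This is exactly what hypothesis (v) is for. I would run the computation by testing the divergence-free fields $\BP$ and $\BP^{*}$ against $\chi_{\delta}\Bx$, where $\chi_{\delta}$ is a cutoff vanishing on $\dOm$ and equal to $1$ outside the collar $\GO_{\Ge}$: the interior contributions are controlled by the distributional conservation laws, while the collar contributions converge, as $\delta\to0$, to honest surface integrals precisely because $\Grad\By$ is continuous up to $\dOm$ there. One should also record that the outward normal $\Bn$ (hence the decomposition $\Grad\By=\BF+\Bb\otimes\Bn$) exists a.e. on the Lipschitz boundary, and that $QW$ is differentiable at each contact value $\BG$; both follow from standard facts about Lipschitz domains and about rank-one convex functions touched from above by a $C^{1}$ function.
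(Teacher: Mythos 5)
Your proposal is correct and follows essentially the same route as the paper's proof: the identical Clapeyron identity obtained by testing (\ref{EL}) against $\By$ and (\ref{Noether}) against $\Bx$, the same boundary decomposition $\Grad\By=\BF+\Bb\otimes\Bn$ from (iv)--(v), the same pointwise bound on the boundary integrand via rank-one convexity of $QW$ at contact points, and the same use of $\Bx\cdot\Bn\ge 0$ for star-shaped domains. The one caveat is your claim that $W-QW$ is $C^{1}$ (hence $QW$ differentiable at contact values), which is not known a priori for general $W$; however, you only need the supporting-line inequality along the single rank-one direction $\Bb\otimes\Bn$, which follows from the one-dimensional convex-touching argument ($QW\le W$ with equality at $\Grad\By$ and $W\in C^{1}$ pins the one-sided derivatives), and this is precisely the content of the lemma from \cite{grtrnc} that the paper quotes at this step.
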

\begin{proof}
  While each ingredient of the   proof presented below
  was in one  way or another already present in the  uniqueness proof of Knops and Stuart
  \cite{knst84},  our theorem is new, since it does not assume that the energy
  density $W(\BF)$ is quasiconvex.
The idea is to express
  the energy of an  equilibrium configuration as a boundary integral using
  a far reaching generalization of a formula in \cite{knst84} which,
  by itself,  can be viewed as   a nontrivial  nonlinear generalization of the
  classical Clapeyron theorem\footnote{The classical  Clapeyron theorem in linear
    elasticity states that the elastic energy
    $E[\Bu]=(1/2)\int_{\GO}\av{\SFC(\Bx)e(\Bu),e(\Bu)}d\Bx$ of an equilibrium configuration
     can be computed by the formula 
    $E[\Bu]=(1/2)\int_{\dOm}\BGs\Bn\cdot\Bu dS$, where $\BGs=\SFC(\Bx)e(\Bu)$
    is the stress tensor, \cite{fotr03}.  
    } 
  \begin{lemma}
    \label{lem:Clapeyron}
Suppose $\By(\Bx)$ is a Lipschitz stationary equilibrium in a Lipschitz domain
$\GO$. Then 
\begin{equation}
  \label{Clapeyron}
  E[\By]=\int_{\GO}W(\Grad\By)=\nth{d}\int_{\dOm}\{
\BP\Bn\cdot\By+\BP^{*}\Bn\cdot\Bx\}d\Bx,
\end{equation}
where $\BP\Bn$ and $\BP^{*}\Bn$ can be regarded as trace
functionals, since they act on Lipschitz functions $\By(\Bx)$ and $\Bx$, respectively.
  \end{lemma}
  \begin{proof}
    \[
\int_{\dOm}\BP^{*}\Bn\cdot\Bx dS=\int_{\GO}\av{\BP^{*},\Grad\Bx}d\Bx=
\int_{\GO}\Trc(\BP^{*})d\Bx=d\int_{\GO}W(\Grad\By)d\Bx-\int_{\GO}\av{\BP,\Grad\By}d\Bx.
\]
Equation (\ref{Clapeyron}) follows from
\[
\int_{\GO}\av{\BP,\Grad\By}d\Bx=\int_{\dOm}\BP\Bn\cdot\By dS.
\]
  \end{proof}
We now apply formula (\ref{Clapeyron}) for the energy of $\By(\Bx)$. Using
the \bc\ (iv) and the assumption (v) we have at all $\Bx\in\dOm$
\[
\Grad\By=\BF+\Ba\otimes\Bn,\qquad \Ba=\dif{\By}{\Bn}-\BF\Bn.
\]
We therefore compute
\[
dE[\By]=\int_{\dOm}(W(\BF+\Ba\otimes\Bn)-\BP(\BF+\Ba\otimes\Bn)\Bn\cdot\Ba)(\Bn\cdot\Bx)dS
\]
next, we use the local stability condition (iii) and appeal to \cite[Lemma~4.2]{grtrnc}.
We quote the part of the lemma we need for the sake of completeness.
\begin{lemma}
  \label{lem:r1cx}
Let $V(\BF)$ be a rank-one convex function such that $V(\BF)\le W(\BF)$. Let
\[
\CA_{V}=\{\BF\in\CO:W(\BF)=V(\BF)\},
\]
where $\CO$ is an open subset of $\bb{M}$ on which $W(\BF)$ is of class
$C^{1}$.  Then for every $\BF\in\CA_{V}$, $\Bu\in\bb{R}^{m}$, and $\Bv\in\bb{R}^{d}$
\begin{equation}
  \label{coolineq}
  V(\BF+\Bu\otimes\Bv)\ge W(\BF)+\BP(\BF)\Bv\cdot\Bu.
\end{equation}
\end{lemma}
We now apply Lemma~\ref{lem:r1cx} by choosing $V(\BF)$ to be $QW(\BF)$. By
assumption (iii) for each $\Bx\in\dOm$ the field
$\Grad\By(\Bx)=\BF+\Ba\otimes\Bn$ is in $\CA_{V}$. Choosing
$\Bu\otimes\Bv=-\Ba\otimes\Bn$, inequality (\ref{coolineq}) becomes
\begin{equation}
  \label{QWineq}
  QW(\BF)\ge W(\BF+\Ba\otimes\Bn)-\BP(\BF+\Ba\otimes\Bn)\Bn\cdot\Ba.
\end{equation}
Finally, we use the assumption of $\GO$ being star-shaped. If we choose the
origin at the star point, then the function $\Bn(\Bx)\cdot\Bx$ is always
non-negative at all points on $\dOm$. Therefore, inequality (\ref{QWineq})
implies
\[
dE[\By]\le QW(\BF)\int_{\dOm}(\Bn\cdot\Bx)dS=d|\GO|QW(\BF).
\]
Since $|\GO|QW(\BF)$ is the minimal value of $E[\By]$ we conclude that
$E[\By]=QW(\BF)$, and $\By(\Bx)$ is the global minimizer.
\end{proof}

\section{Multiplicity of global minimizers}
\setcounter{equation}{0}
\label{sec:attainment}
 The absence of strong local minimizers in hard device with
  affine \bc s demonstrated in the previous section leaves
the question of the characterization of all global minimizers in
\begin{equation}
  \label{globmin}
  QW_{0}(\BH_{0})=\inf_{\Bu|_{\dOm}=\BH_{0}\Bx}\dashint_{\GO}W_{0}(\Grad\Bu)d\Bx,
\end{equation}
where $W_{0}(\BH)$ is defined in (\ref{ex:ener}).
We remark that the construction of the periodic laminate in
Section~\ref{sec:slm} provides us with a minimizing sequence in (\ref{globmin}) in
\emph{arbitrary domains}. In this section we raise the question of
attainability of the minimum in (\ref{globmin}). In this regard, laminates can no
longer be used as the \bc s 
\begin{equation}
  \label{affbc}
  \Bu(\Bx)=\BH_{0}\Bx,\quad\Bx\in\dOm.
\end{equation}
are inconsistent with $\Grad\Bu$ taking exactly two specific values almost everywhere
in $\GO$. We will therefore attempt to find energy minimizers using the
sufficiency conditions from Theorem~\ref{th:ELA}.  Accordingly, we only need
to find solutions $\Bu(\Bx)$ of (\ref{EL}) that satisfies (\ref{affbc}) while
respecting the stability condition
\begin{equation}
\Div\Bu(\Bx)\not\in(\Gth_1,\Gth_2)\mbox{ for a.e. }\Bx\in\GO.
\label{ex:bin}
\end{equation}
Let $\BGf(\Bx)=\Bu(\Bx)-\BH_{0}\Bx$. Then (\ref{EL}) can be written in
terms of $\BGf$ as follows
\begin{equation}
\mu\GD \BGf+\nabla\Phi'(\Div\Bu) = \mu \Grad(\Div\BGf),\quad \Bx\in \GO.
\label{zequil}
\end{equation}
Integrating the dot product of $\BGf(\Bx)$ and (\ref{zequil}) by parts
we obtain
\[
\int_\GO\left\{\mu(|\Div\BGf|^2-|\Grad\BGf|^2)-\Phi'(\Div\Bu)\Div\BGf\right\}d\Bx=0.
\]
Now we use the algebraic identity
\[
|\Div\BGf|^2-|\Grad\BGf|^2=2J_2(\Grad\BGf)-\hf|\Grad\BGf-(\Grad\BGf)^{t}|^2
\]
and the fact that $J_2(\Grad\BGf)$ given by (\ref{J2inv}) is a null-Lagrangian to
obtain
\begin{equation}
\int_\GO\left\{\frac{\mu}{2}|\Grad\BGf-(\Grad\BGf)^{t}|^2+\Phi'(\Div\Bu)\Div\BGf\right\}d\Bx=0.
\label{LMtrick}
\end{equation}
Now let us show that (\ref{ex:bin}) implies that
\begin{equation}
\int_\GO\Phi'(\Div\Bu)\Div\BGf(\Bx) d\Bx\ge 0.
\label{keyineq}
\end{equation}
Indeed, from the geometric interpretation of $\Gth_1$ and $\Gth_2$
(as the points of common tangency 
) we conclude that in view of (\ref{ex:bin}) we can write 
\begin{equation}
\Phi(\Div\Bu(\Bx))=\Phi^{**}(\Div\Bu(\Bx))
\label{helpful}
\end{equation}
for a.e. $\Bx\in\GO$.
For a convex function $\Phi^{**}(\Gth)$ the tangent line to the graph
of that function lies always below the graph which means that 
\[
\Phi^{**}(\eta)\ge \Phi'(\Gth)(\eta-\Gth)+\Phi(\Gth)
\]
for any $\eta\in\mathbb{R}$ and any $\Gth\not\in(\Gth_{1},\Gth_{2})$.
Substituting $\eta=\Trc\BH_{0}$ and $\Gth=\Div\Bu(\Bx)$ we obtain
\[
\Phi'(\Div\Bu(\Bx))\Div\BGf(\Bx)\ge\Phi(\Div\Bu(\Bx))-\Phi^{**}(\Trc\BH_{0})=
\Phi^{**}(\Div\Bu(\Bx))-\Phi^{**}(\Trc\BH_{0})
\]
for a.e. $\Bx\in\GO$. Now   (\ref{keyineq}) follows from Jensen's
inequality for a convex function $\Phi^{**}(\Gth)$ and the fact that 
\begin{equation}
\label{thav}
\nth{|\GO|}\int_{\GO}\Div\Bu(\Bx)d\Bx=\Trc\BH_{0}.
\end{equation}
Now, inequality (\ref{keyineq}) together with  (\ref{LMtrick}) imply
that $\Grad\BGf=(\Grad\BGf)^{t}$. Therefore,
\begin{equation}
  \label{divcurl}
  0=\Div(\Grad\BGf-(\Grad\BGf)^{t})=\GD\BGf-\Grad(\Div\BGf).
\end{equation}
Thus, (\ref{zequil}) implies that $\Grad\Phi'(\Div\Bu)=0$, and there exists a
constant $P_{0}$, such that
\begin{equation}
  \label{intEL}
  \Phi'(\Div\Bu) = P_{0}.
\end{equation}
The constraint (\ref{ex:bin}) then implies that either
$\Div\Bu=\Gth_{0}$ is a constant function in $\GO$, or that $P_{0}=\Phi'(\Gth_{1})=\Phi'(\Gth_{2})$, 
in which case there exists a subset
$A$ of $\GO$ such that 
\begin{equation}
\Div\Bu(\Bx)=\Gth_1\chi_A(\Bx)+\Gth_2\chi_{\GO\setminus A}(\Bx),
\label{theta}
\end{equation}
where $\Gth_{1}<\Gth_{2}$ are the endpoints of the interval
$\{\Gth\in\bb{R}:\Phi^{**}(\Gth)<\Phi(\Gth)\}$ (see Fig.~\ref{fig:Phitheta}).
Recalling (\ref{thav}) we conclude that we must have $\Div\Bu=\Trc\BH_{0}$, if
$\Trc\BH_{0}\not\in(\Gth_{1},\Gth_{2})$. But then $\Div\BGf=0$, and thus,
according to (\ref{divcurl}), $\BGf$ solves $\GD\BGf=0$ in $\GO$, with
$\BGf=0$ on $\dOm$. It follows that $\BGf=0$ and we conclude that
$\Bu(\Bx)=\BH_{0}\Bx$ is the only equilibrium satisfying (\ref{ex:bin}).

If $\Trc\BH_{0}\in(\Gth_{1},\Gth_{2})$, then in addition to
$\Bu(\Bx)=\BH_{0}\Bx$ we may have other solutions satisfying (\ref{theta}),
in which case
\begin{equation}
  \label{vf}
  \Go=\frac{|A|}{|\GO|}=\frac{\Trc\BH_{0}-\Gth_{2}}{\Gth_{1}-\Gth_{2}}
\end{equation}
is the volume fraction of the phase in which $\Div\Bu(\Bx)=\Gth_{1}$.

If, in addition, $\GO$ is simply connected, then there exists a scalar
potential $h\in W_{0}^{2,2}(\GO)$, such that $\BGf=\Grad h$. Then,
in terms of the potential $h$ we obtain a  free boundary problem
\begin{equation}
\left\{
\begin{array}{l}
\GD h=(\Gth_1-\Gth_2)(\chi_A(\Bx)-\Go),\quad \Bx\in \GO\\[1ex]
h\in W_{0}^{2,2}(\GO).
\end{array}
\right.
\label{inverse}
\end{equation}
As formulas (\ref{vf}), (\ref{inverse}) indicate, the volume fraction $\Go$ of the precipitate is
uniquely determined by the hard device loading $\BH_{0}$, whose shear
component has no effect on the precipitate morphology. 
We remark that while (\ref{inverse}) looks like a Cauchy problem for the
Poisson equation, the \rhs\ in (\ref{inverse}) is not fixed and the problem
would be solved if we can find just the right shape $A$ of the inclusion.

\begin{figure}[t]
  \centering
  \includegraphics[scale=0.45]{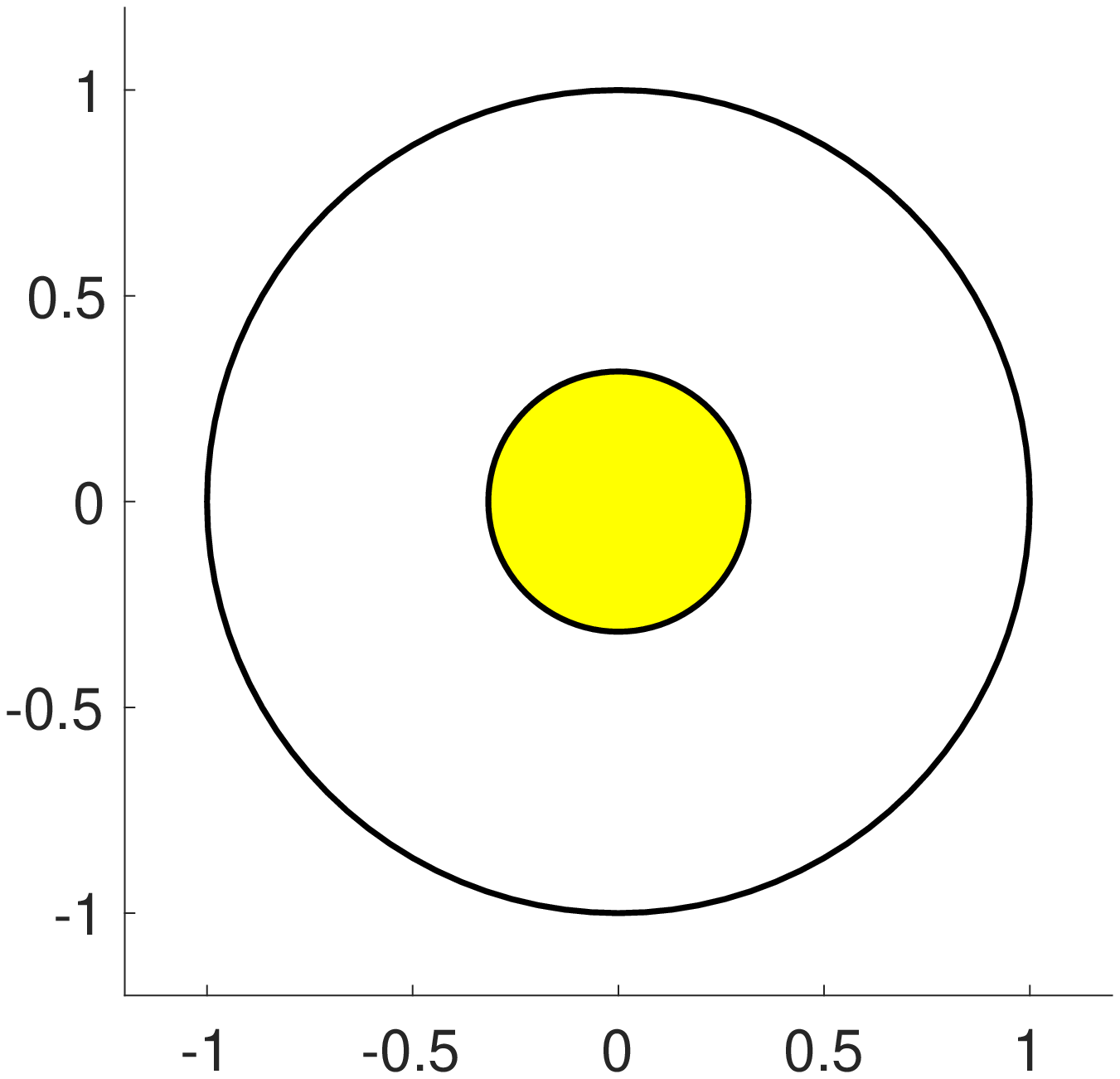}~~~~~~~~~~
\includegraphics[scale=0.45]{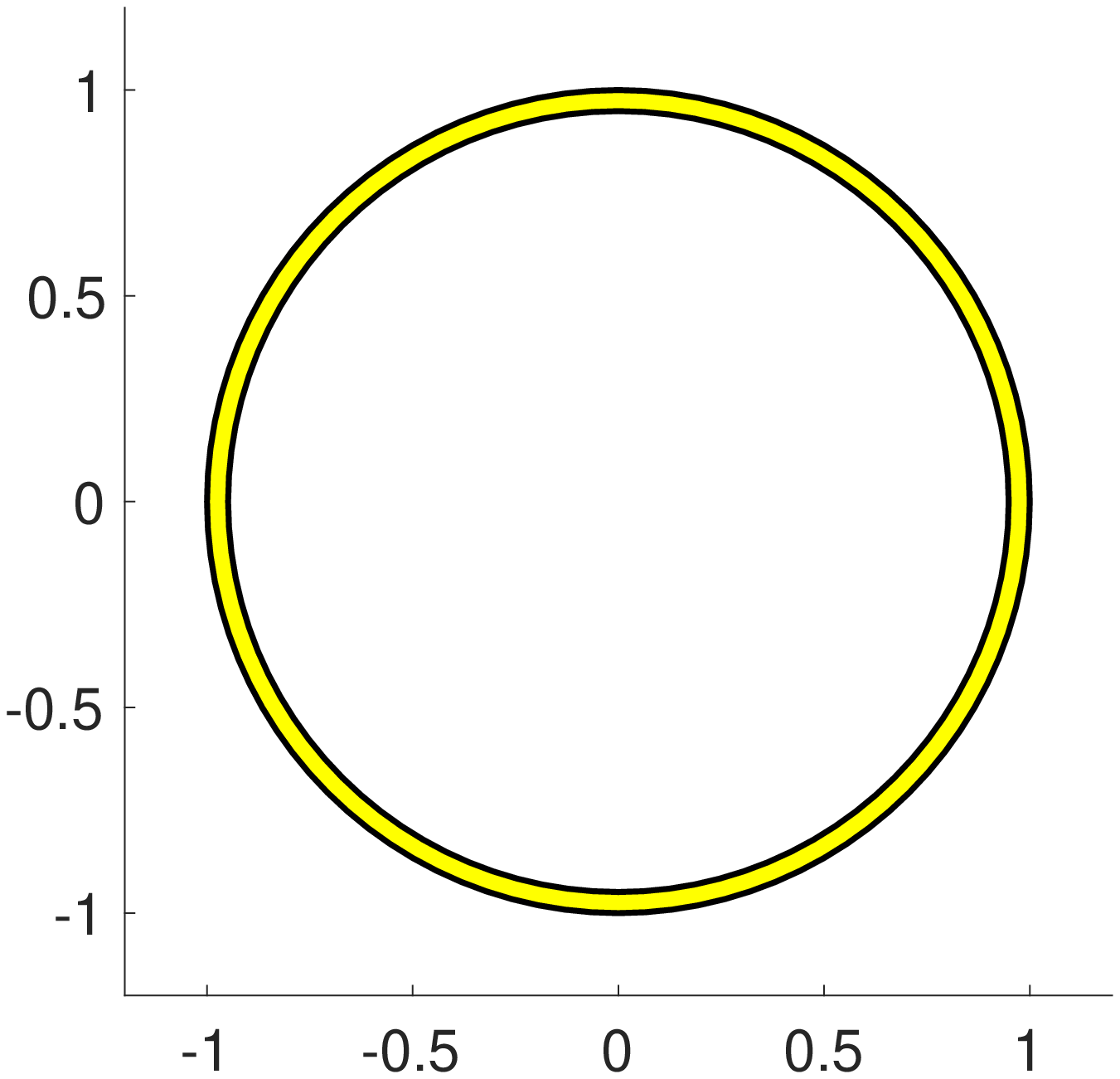}
  \caption{Morphological hysteresis showing two different morphologies of phase
    arrangements corresponding to 10\% volume fraction of phase 2 ($\Go=0.1$), shown in
    yellow. In the left panel nucleation starts at the center. In the right
    panel nucleation starts at the boundary.}
  \label{fig:mhyst}
\end{figure}
In fact, a solution of \eqref{inverse} can be easily found in explicit form for some
particularly simple domain geometries. For instance, it is easy to construct a
radially symmetric solution when $\GO$ is the unit ball, taking as the set $A$
to be either the concentric ball of radius $r_{0}=\Go^{1/d}$, or its
complement, in which case $r_{0}=(1-\Go)^{1/d}$. Then (\ref{inverse}) is
solved by $h(\Bx)=h_{\Go}(|\Bx|)$, given by
\begin{equation}
h_{\Go}(r)=\left\{\begin{array}{ll}
\vspace{1ex}
\nth{2d}(\Gth_{1}-\Gth_{2})(1-\Go)\left(r^{2}-
\frac{d(\Go^{2/d}-\Go)}{(d-2)(1-\Go)}\right), &\quad 0\le r\le \Go^{1/d}\\[2ex]
-\nth{2d}(\Gth_{1}-\Gth_{2})\Go\left(r^{2}+\frac{2}{(d-2)r^{d-2}}-\frac{d}{d-2}\right),
&\quad \Go^{1/d}\le r\le 1,
\end{array}\right.
\label{rotinvsol}
\end{equation}
in the former case. In the latter case, $h(\Bx)$ is given by the same
formula (\ref{rotinvsol}) where $\Gth_{1}$ and $\Gth_{2}$ are interchanged and
$\Go$ is replaced by $1-\Go$, i.e. $h(\Bx)=-h_{1-\Go}(|\Bx|)$. These two optimal configurations with the same energy are illustrated in Fig. \ref{fig:mhyst}. With the increase of the volume fraction $\Go$ the nucleus in the left panel (colored in yellow) will grow outward while the nucleus in the right panel (also colored in yellow)  will grow inward. At  $\Go=1$  both transformation   terminate as the second phase   completely takes over the whole domain. Note that the associated morphological mechanisms of a phase transformation, which start and end in the same configurations while going through energetically equivalent states, are nevertheless different.

One can show that the ``concentric sphere" solution (\ref{rotinvsol}) can be used
to construct an infinite family of solutions in any other Lipschitz
domain. This can be  done using  the  so-called Hashin's ``concentric sphere''
construction, \cite{hash62}. In this construction the set $A$ is a
countable union of variously scaled copies of the radially symmetric solution
(\ref{rotinvsol}) filling $\GO$ up to a set of Lebesgue measure zero. Let
$B(\Bx_{i},a_{i})\subset B(\Bx_{i},R_{i})$, $i=1,2,\ldots$ are the concentric
balls used in Hashin's construction, where
\[
\frac{a_{i}^{d}}{R_{i}^{d}}=\Go.
\]
The inner balls $B(\Bx_{i},a_{i})$ in the construction belong
to the set $A$, while the spherical shells
$B(\Bx_{i},R_{i})\setminus B(\Bx_{i},a_{i})$ belong to $\GO\setminus A$.
The function $h(\Bx)$
restricted to the  ball $B(\Bx_{i},R_{i})$
is given by
\[
h(\Bx)=R_{i}^{2}\:h_{\Go}\left(\frac{|\Bx-\Bx_{i}|}{R_{i}}\right),
\quad\Bx\in B(\Bx_{i},R_{i}),
\]
where $h_{\Go}$ is given by (\ref{rotinvsol}).
The function $h(\Bx)$ defined like this on each of the concentric balls
does indeed solve (\ref{inverse}), since on the boundary of each ball
$B(\Bx_{i},R_{i})$ both $h(\Bx)$ and $\Grad h(\Bx)$ are zero.

It is clear that the  ``concentric sphere'' construction in a non-circular domain will have  an infinite surface area (for the  formal proof
see \cite{merg54tr,wesl60}), which is  unacceptable in the physically more realistic context where phase boundaries (surfaces of gradient discontinuity) carry  additional surface energy. For this reason one may be interested whether other optimal microstructures  with  finite surface area exist as well. In some cases, when the domain $\GO$ is sufficiently simple, such low surface energy alternatives are indeed  known to exist.
 For example, if $\GO$ is an ellipsoid, then the low-surface-area minimizer $A$ is a confocal ellipsoid (or
its complement) \cite{berg,mi80,trtr,zhik91,gra}. However, if $\GO$ is  more complex, for instance, a
square,  the   low-surface-area minimizer, if it exists, may be  nontrivial. To corroborate this claim,  we show in  Appendix~\ref{sec:MOMA} that in this case  an    optimal inclusion with square symmetry and simple topology simply does not exist.

\section{Conclusions}
\setcounter{equation}{0}
\label{sec:conc}

In this paper  we considered a classical problem of nonlinear  elasticity for a material undergoing phase transition. Mathematically this physical problem  reduces to  a non-convex vectorial problem of the calculus of variations. 

In this framework we presented 
  a simple, yet non-trivial example of an energy density (of a material) for
  which 
     the absence of metastability  in a hard device coexists with a wild nonuniqueness of global minimizers.

 Metastability in elastostatics, understood as the existence  of strong local
  minimizers that are not global, is ubiquitous as    hysteresis is a typical phenomenon accompanying  martensitic phase  transitions. While for the Neumann \bc s the existence of such local minimizers is indeed  common, as its existence  can be  linked to the generic  incompatibility of the energy wells, here we showed  that for the Dirichlet \bc s the  incompatibility of the  wells
 does not necessarily cause 
``metastability". 
More specifically, we have presented   an example of
 the energy density of a  hyperelastic material with non-rank-one convex,
 double-well energy and non-rank one connected energy wells, for which we could prove  the lack of strong local
 minimizers which are not global on any domain and for any Dirichlet \bc
 s. The analytical transparency of our arguments was due to the utmost simplicity of the chosen energy density which is geometrically linear and  isotropic.
  
 For the same material model we could fully characterize the necessary and
 sufficient conditions defining global energy minima in hard device loading.  The obtained  conditions   allowed us to reveal  the multiplicity of global minimizers with the associated nonuniqueness unrelated to either objectivity or crystallographic  symmetry. As an  important element of this analysis we used a novel way of expressing the  energy of equilibrium configurations as boundary integrals which can be  viewed as a nontrivial nonlinear generalization of the classical Clapeyron  theorem.

While we showed that the relaxation (quasi-convexification) of the energy  in our model can be achieved by simple lamination or coated sphere construction, these optimal microstructures  are hardly physical as the associated surface area is infinite.  In  real physical  situations finite surface energy plays the role of the selection mechanism among otherwise energetically equivalent configurations
 \cite{Kloucek:1994:CMM,komu,Dolzmann:1995:ISE}, ruling out  constructions with infinite surface area. Therefore, of particular interest in physical applications are global minimizers with finite surface area, and we  explicitly computed a one-parameter
  family of non-affine energy minimizing configurations for the case of a
  finite domain with smooth  boundary. An interesting aspect of the obtained solution is that,  despite the absence of metastability  and the associated constitutive  hysteresis, it shows that the  direct and the reverse transformation  may follow different morphological paths while traversing energetically equivalent configurations.   The possibility of such a morphological hysteresis may be found advantageous  in applications if the ways of  manipulating  the microstructure,    beyond the scale and range of  classical continuum elasticity,  are employed.  
 
\medskip

\textbf{Acknowledgments.} YG is grateful to Pavel Etingof who pointed out the
references for the proof of the infinity of the surface area of Hashin's
concentric sphere construction in 1992.  

\textbf{Funding.} YG was supported by the National Science Foundation
under Grant No. DMS-2005538. The work of LT was supported by the French grant
ANR-10-IDEX-0001-02 PSL. 

\textbf{Author contributions.}  Both authors wrote the main manuscript
text and reviewed the manuscript.

\section{Declarations}
\textbf{Competing interests.} The authors declare no competing interests.

\appendix

\section{The degeneracy of acoustic tensors of rank-one envelopes}
\setcounter{equation}{0}
\label{sec:ac}
Here we prove that the acoustic tensor of a rank-one convex envelope $RW$ of
the non rank-one convex energy must have a degenerate direction at all points
$\BF_{0}$ where $RW(\BF_{0})<W(\BF_{0})$. We also show that in particularly simple
situations, such as the one discussed in this paper, this
property may be even sufficient to compute the whole rank-one convex envelope.

We recall that the acoustic tensor of the energy $W(\BF)$ at $\BF=\BF_{0}$ in the direction $\Bn$ is a quadratic form $\BA(\Bn)$ defined by
\[
\BA(\Bn)\Ba\cdot\Ba=\av{W_{\BF\BF}(\BF_{0})(\Ba\otimes\Bn), \Ba\otimes\Bn}.
\]

\begin{theorem}
\label{th:acou}
Let $\BF_0$ be fixed and suppose $RW(\BF_0)<W(\BF_0)$, where $RW$
denotes the rank-1 convexification of $W$.
Assume further that $W$ and $RW$ are $C^2$ near $\BF_0$. Let $\BA_{0}(\Bn)$ be the acoustic
tensor of $RW$ at $\BF_{0}$.
Then there is a direction $\Bn$
such that
$ 
\det\BA_{0}(\Bn)=0.
$ 

\end{theorem}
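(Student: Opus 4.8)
The strategy is to argue by contradiction, exploiting the defining maximality property of the rank-one convex envelope: $RW$ is the largest rank-one convex function that does not exceed $W$. Since a supremum of rank-one convex functions is again rank-one convex, $RW$ is itself rank-one convex, and because it is assumed $C^{2}$ near $\BF_{0}$ its one-dimensional restrictions $t\mapsto RW(\BF_{0}+t\,\Ba\otimes\Bn)$ are convex, yielding the Legendre--Hadamard inequality
\[
\BA_{0}(\Bn)\Ba\cdot\Ba=\av{RW_{\BF\BF}(\BF_{0})(\Ba\otimes\Bn),\Ba\otimes\Bn}\ge 0
\]
for all $\Ba\in\bb{R}^{m}$ and $\Bn\in\bb{R}^{d}$; that is, each $\BA_{0}(\Bn)$ is positive semidefinite. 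This is the only structural fact about $RW$ that the argument needs.

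Now suppose, for contradiction, that $\det\BA_{0}(\Bn)\neq 0$ for every $\Bn$. Combined with positive semidefiniteness this forces $\BA_{0}(\Bn)$ to be positive definite for all $\Bn$, and a compactness argument over the unit sphere $|\Bn|=1$ produces a constant $c>0$ with $\BA_{0}(\Bn)\Ba\cdot\Ba\ge c|\Ba|^{2}|\Bn|^{2}$. Since $RW$ is $C^{2}$, continuity of $RW_{\BF\BF}$ gives a neighborhood $U$ of $\BF_{0}$ on which this strict Legendre--Hadamard bound persists with constant $c/2$. Moreover, as $RW(\BF_{0})<W(\BF_{0})$ and both functions are continuous, after shrinking $U$ we may assume $W-RW\ge\Gg>0$ on $U$.

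The contradiction comes from pushing $RW$ upward near $\BF_{0}$ while preserving rank-one convexity. Fix a smooth bump $\psi\ge 0$ with support contained in $U$, with $\psi(\BF_{0})>0$ and bounded Hessian $\|\psi_{\BF\BF}\|\le M$, and set $V=RW+\eps\psi$. The acoustic tensor of $V$ differs from that of $RW$ by at least $-\eps M|\Ba|^{2}|\Bn|^{2}$, so choosing $\eps\le c/(2M)$ keeps it positive semidefinite on $U$; off the support of $\psi$ one has $V=RW$, so $V$ is rank-one convex everywhere. Choosing in addition $\eps$ small enough that $\eps\,\sup\psi\le\Gg$ gives $\eps\psi\le W-RW$ on $U$, hence $V\le W$ throughout $\bb{M}$. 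But $V(\BF_{0})=RW(\BF_{0})+\eps\psi(\BF_{0})>RW(\BF_{0})$, so $V$ is a rank-one convex function below $W$ that strictly exceeds $RW$ at $\BF_{0}$, contradicting the maximality of $RW$. Therefore some direction $\Bn$ must have $\det\BA_{0}(\Bn)=0$.

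The delicate point is the perturbation in the last step: one must balance the height of the bump (which controls the constraint $V\le W$ and is made small through $\eps$) against the size of its Hessian (which governs preservation of rank-one convexity and is absorbed by the strict bound $c/2$). The strict positive definiteness assumed for contradiction is precisely what creates room for such an upward push; at a genuine degeneracy $\det\BA_{0}(\Bn)=0$ no such perturbation is available, which is the geometric content of the theorem. A more constructive route --- showing directly that $RW$ is affine along some rank-one line through $\BF_{0}$, so that the associated $\Ba$ lies in the kernel of $\BA_{0}(\Bn)$ --- would require identifying the laminate realizing the envelope and is less robust than the maximality argument above.
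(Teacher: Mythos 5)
Your proof is correct and follows essentially the same route as the paper's: contradiction via the maximality of $RW$, a uniform Legendre--Hadamard lower bound obtained by compactness, persistence of that bound and of the gap $W-RW$ on a neighborhood of $\BF_0$ by $C^2$ continuity, and an upward bump perturbation that remains rank-one convex and below $W$, contradicting that $RW$ is the largest such function. The only immaterial difference is in the bump: the paper scales both amplitude and width, $\Ge\phi\bigl((\BF-\BF_0)/\sqrt[4]{\Ge}\bigr)$, so the Hessian perturbation is $O(\sqrt{\Ge})$, whereas you fix the bump's shape and shrink only its amplitude $\eps$, which achieves the same balance between staying below $W$ and preserving the strict acoustic-tensor bound.
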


\begin{proof}
Assume that there is no such direction $\Bn$. Since $RW$ is necessarily rank-1
convex we can conclude that for any direction $\Bn$ the matrix $\BA_{0}(\Bn)$
is positive semidefinite. Since the function $\Bn\mapsto\BA_{0}(\Bn)$ is continuous
there exists a positive number $\Ga$ such that for every unit vector $\Bn$
$$
\BA_{0}(\Bn)\ge \Ga \BI.
$$

Since $RW\in C^2$ near $\BF_0$ there exists a number $\Gd>0$ such that
for every $\BF$ satisfying $|\BF-\BF_0|<\Gd$ the following inequalities hold:
\begin{enumerate}
\item $\BA(\Bn)\ge \hf\Ga \BI,$
where $\BA(\Bn)$ is the acoustic tensor of $RW$ at $\BF$.
\item $W(\BF)-RW(\BF)\ge\hf(W(\BF_0)-RW(\BF_0))=\Gb>0.$
\end{enumerate}
Now consider a function
$$
T_{\Ge}(\BF)=RW(\BF)+\Ge\phi(\frac{\BF-\BF_0}{\sqrt[4]{\Ge}}),
$$
where $\phi$ is a smooth nonnegative function supported on the unit ball in
$\bb{M}$ and such that $\phi(\Bzr)=1$.
We can choose $\Ge$ so small that the following inequalities
hold:
\begin{enumerate}
\item $\|\Ge\phi\|_{L^{\infty}}<\hf\Gb,$
\item $\|\sqrt{\Ge}\nabla_{\BF}\nabla_{\BF}\phi\|_{L^{\infty}}\le \nth{4}\Ga,$
\item supp $\phi(\frac{\BF-\BF_0}{\sqrt[4]{\Ge}})\subset B(\BF_0,\Gd)$,
where $B(\BF_0,\Gd)$ is the ball of radius $\Gd$ around $\BF_0$ in $\bb{M}$.
\end{enumerate}
Then $T_{\Ge}(\BF)\le W(\BF)$ and the acoustic tensor $\BA_{\Ge}(\Bn)$ of
$T_{\Ge}(\BF)$ satisfies
$$
\BA_{\Ge}(\Bn)\ge \nth{4}\Ga\BI
$$
in the sense of quadratic forms.
Thus $T_{\Ge}(\BF)$ is rank-1 convex and $T_{\Ge}(\BF)\le W(\BF)$ but
$T_{\Ge}(\BF_{0})>RW(\BF_{0})$.
Contradiction. Thus our assumption is false and at
every point $\BF_0$ where $RW(\BF_{0})<W(\BF_{0})$ there is a direction
$\Bn$ such that the acoustic
tensor $\BA_{0}(\Bn)$ is degenerate.
\end{proof}
\begin{remark}
It follows that materials that transform by forming microstructures with sharp phase boundaries to accommodate deformations produced by the propagation of sound waves will
have a direction with a zero sound speed.
\end{remark}
We can apply Theorem~\ref{th:acou} to the energy (\ref{ex:ener}). Since our material is isotropic, the degeneration of the acoustic tensor may be either through $\mu=0$ or  through $\lambda +2 \mu=0$ with the latter also meaning that the bulk modulus $\Gk=-2\mu(d-1)/d$. The possibility that $\mu=0$ is excluded because the tangential shear modulus is the same at every deformation.

Suppose that we have somehow guessed that if
\[
W_{0}(\BH)=f(\Trc\BGve)+\mu|\dev{\BGve}|^{2},
\]
then 
\[
RW_{0}(\BH)=F(\Trc\BGve)+\mu|\dev{\BGve}|^{2}
\]
for some function $F$, yet to be determined.
In that case Theorem~\ref{th:acou} will let us determine the function 
$F(\Gth)$. It is easy to compute
that for any unit vector $\Bn$ and any
vector $\Ba\in\bb{R}^{d}$
\[
\av{RW_{0,\BH\BH}(\BH)(\Bn\otimes\Ba),\Bn\otimes\Ba}=
F''(\Trc\BGve)(\Ba,\Bn)^{2}+2\mu\left|\hf(\Bn\otimes\Ba+\Ba\otimes\Bn)-
\nth{d}(\Ba,\Bn)\BI\right|^{2}.
\]
Therefore, we get a formula for the acoustic tensor $\BA(\Bn)$.
\begin{equation}
\BA(\Bn)=\left(F''(\Trc\BH)+\frac{d-2}{d}\mu\right)\tns{\Bn}+\mu\BI.
\label{acou}
\end{equation}
We see that $\det\BA(\Bn)=\mu^{2}(F''(\Trc\BGve)+2(d-1)\mu/d)$ for all
directions $\Bn$. Thus, for all $\BH$ for which $RW_{0}(\BH)<W_{0}(\BH)$ we get
$F''(\Trc\BH)=-2(d-1)\mu/d$. The continuity of $RW_{0,\BH}$ implies that
at the boundary points $\Gth_{1}$ and $\Gth_{2}$ of the binodal region
we have $F'(\Gth_{1})=f'(\Gth_{1})$ and
$F'(\Gth_{2})=f'(\Gth_{2})$. Therefore, in the binodal region, where
$F'(\Gth)=-2(d-1)\Gth\mu/d+C$ for some constant $C$,
we must have $C=\Phi'(\Gth_{1})=\Phi'(\Gth_{2})$. Further, the continuity of $W_{0}$
implies that the affine function\footnote{The function $y(\Gth)$ is affine
  because the property of $F(\Gth)$ can be written as $y''(\Gth)=0$.} 
$y(\Gth)=F(\Gth)+(d-1)\mu\Gth^{2}/d$ would be  the
equation of the common tangent
to the graph of $\Phi(\Gth)$. Thus, we obtain that
$F(\Gth)+(d-1)\mu\Gth^{2}/d=\Phi^{**}(\Gth)$, and we recover the 
rank-one convex envelope of $W_{0}(\BH)$, which in this case is seen to coincide with its quasiconvexification (\ref{ex:qcx}).

\section{A non-existence of a topologically simple square symmetric minimizer in a square}
\setcounter{equation}{0}
\label{sec:MOMA}
We  begin with a general observation that if $A\subset\GO\subset\bb{R}^{2}$
is an open subset for which the problem (\ref{inverse}) has a solution $h(x,y)$, then
functions
\[
\Tld{h}_{+}(x,y)=h(x,y)+\nth{4}(\Gth_{1}-\Gth_{2})\Go(x^{2}+y^{2}),\quad
\Tld{h}_{-}(x,y)=h(x,y)-\nth{4}(\Gth_{1}-\Gth_{2})(1-\Go)(x^{2}+y^{2})
\]
are harmonic in $\GO\setminus A$ and $A$ respectively. We can then conclude  that the
functions $\Md\Tld{h}_{\pm}/\Md x - i \Md\Tld{h}_{\pm}/\Md y$ are analytic
in the complex variable $z=x+iy$ on their respective domains. Thus, the functions
\[
H_{+}(z)=\frac{2}{\Gth_{1}-\Gth_{2}}\left(\dif{h}{x}-i\dif{h}{y}\right)+
\Go\bra{z},\quad z\in \GO\setminus A,
\]
\[
H_{-}(z)=\frac{2}{\Gth_{1}-\Gth_{2}}\left(\dif{h}{x}-i\dif{h}{y}\right)-
(1-\Go)\bra{z}, \quad z\in A
\]
are also analytic. The boundary conditions in (\ref{inverse}) and the continuity of
$\Grad h$ across $\Md A$, representing the kinematic compatibility of the
displacement, 
imply that
\begin{equation}
  \label{complex}
\left\{
  \begin{array}{cc}
H_{+}(z)=\Go\bra{z},\quad & z\in\dOm,\\[1ex]
H_{+}(z)-H_{-}(z)=\bra{z},\quad & z\in\Md A.
  \end{array}
\right.
\end{equation}
Thus, the problem of finding global minimizers in $\GO$ with affine \bc s
reduces to the problem (\ref{complex}) in the theory
of complex analytic functions of one complex variable.

We now attempt to solve this problem when $\GO$ is a square
centered at the origin with diagonal of length 2. Then along the bottom
side of the square we have
$H_{+}(x-i/\sqrt{2})=\Go(x+i/\sqrt{2})=\Go(z+i\sqrt{2})$.
Therefore $H_{+}(z)=\Go(z+i\sqrt{2})$, while along the right side
of the square we have
$H_{+}(1/\sqrt{2}+iy)=\Go(1/\sqrt{2}-iy)=\Go(\sqrt{2}-z)$.
Thus, $H_{+}(z)=\Go(\sqrt{2}-z)$. These contradictory expressions for $H_{+}(z)$
can only be reconciled by a structure with topology indicated in
Figure~\ref{fig:square}, where the set $A$, in which $\Div\Bu=\Gth_{1}$, is shaded in yellow.
\begin{figure}
  \centering
\includegraphics[scale=0.3]{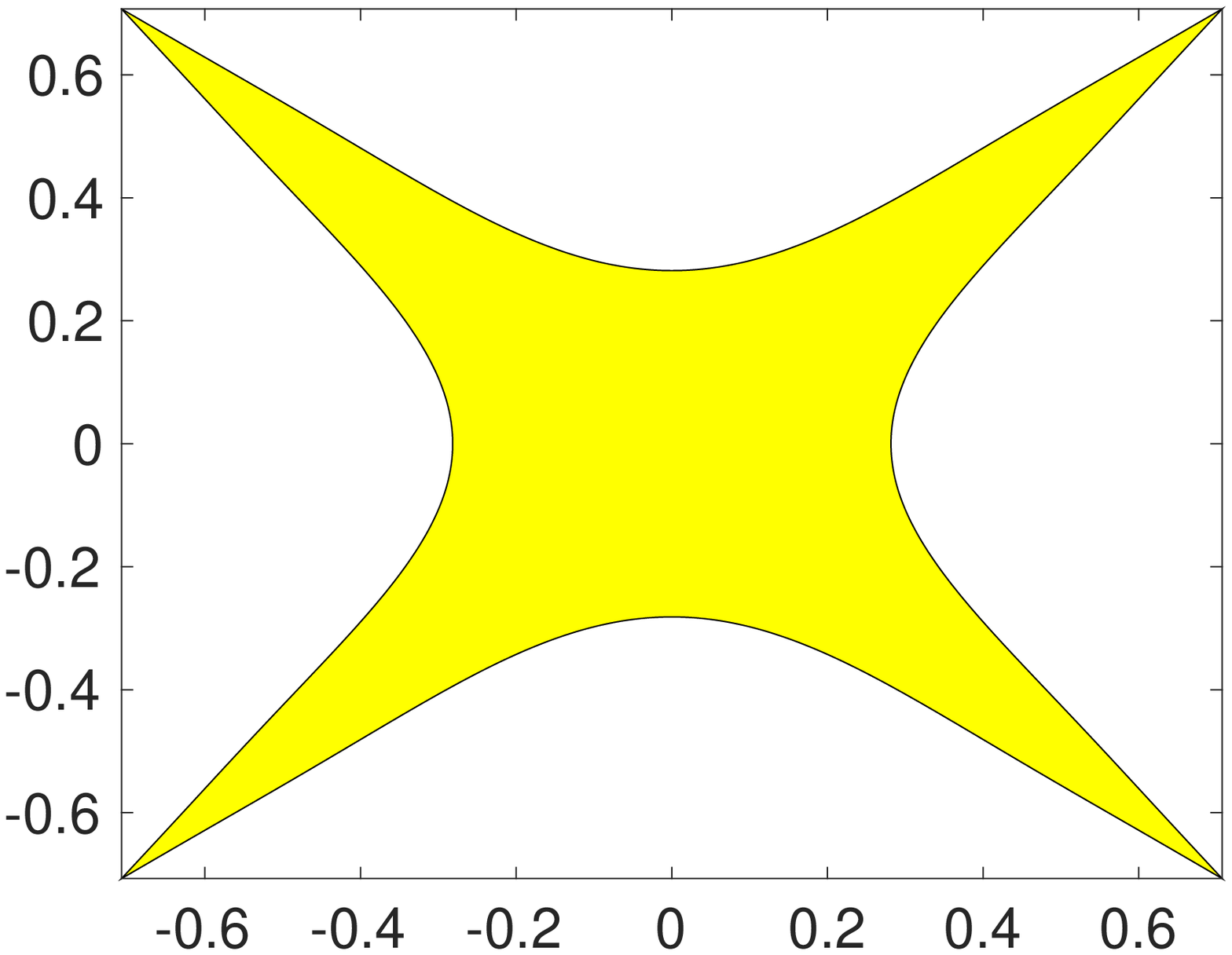}~~~~~~~~~~~~~
\includegraphics[scale=0.3]{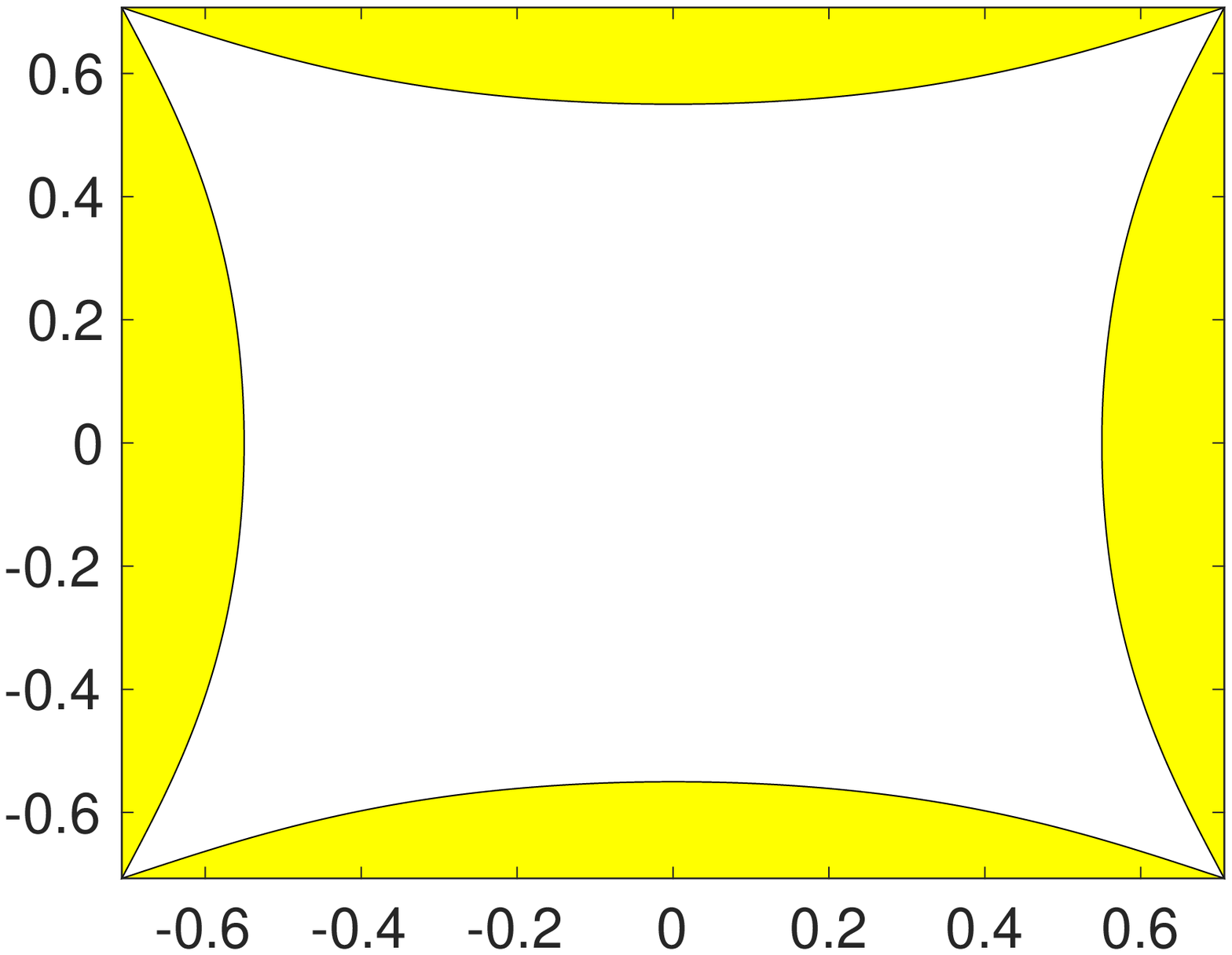}
\caption{
 Morphological hysteresis showing two different morphologies of phase
    arrangements in a square.  The hypothetical  coherent precipitate  
 with simple topology   and  square symmetry corresponds to 30\% volume fraction of phase 2 ($\Go=0.3$) is  shown in
    yellow. In the left panel nucleation starts at the center. In the right
    panel nucleation starts at the boundary. 
  }
\label{fig:square}
\end{figure}

The square symmetry of a problem also suggests
that we should look for a structure with square symmetry. Thus, if we assume that $A$  has a simple topology without singular points,  we  just need
to find the curve $\GG$ that joins the top and bottom ends of the right side of the square. Knowing $H_{+}(z)$ in all four regions adjacent to the
sides of the square gives the boundary values for $H_{-}(z)$ on $\Md A$.
The square symmetry of $A$ together with (\ref{complex}) implies that
\begin{equation}
H_{-}(iz)=-iH_{-}(z),\quad z\in\Md A.
\label{sqsym}
\end{equation}
 There is a holomorphic function $H_{-}$
in $A$ with given boundary values \IFF for every $n\ge 0$
\begin{equation}
\int_{\Md A}H_{-}(z)z^{n}dz=0.
\label{theeq}
\end{equation}
The integral in (\ref{theeq}) can easily be written as an integral along $\GG$
because of (\ref{sqsym}). If $n$ is not a multiple of 4 then the integral in
(\ref{theeq}) will evaluate to zero by virtue of (\ref{sqsym}) alone. If
$n=4k$, then we require that
\begin{equation}
\int_{\GG}H_{-}(z)z^{4k}dz=0,\quad k\ge 0.
\label{Gamma}
\end{equation}
On $\GG$ we have
$H_{-}(z)=\Go(\sqrt{2}-z)-\bra{z}$. Therefore, (\ref{Gamma}) is equivalent to
\begin{equation}
\int_{\GG}\bra{z}z^{4k}dz=\frac{(-1)^{k}i\Go}{(4k+1)(2k+1)},\quad k\ge 0.
\label{shapeq}
\end{equation}
If we  integrate by parts in (\ref{shapeq}):
\[
\int_{\GG}\bra{z}z^{4k}dz=\int_{\GG}\bra{z}\,d\!\left(\frac{z^{4k+1}}{4k+1}\right)=
-\int_{\GG}\frac{z^{4k+1}}{4k+1}d\bra{z}.
\]
and parametrize the curve $\GG$ by $z(t)=a(t)+it$,
$t\in[-1/\sqrt{2},1/\sqrt{2}]$. Then
$\bra{z'(t)}=z'(t)-2i$, 
we obtain
\[
\int_{\GG}\bra{z}z^{4k}dz=\frac{(-1)^{k+1}i}{(4k+1)(2k+1)}+\frac{2i}{4k+1}
\int_{-1/\sqrt{2}}^{1/\sqrt{2}}z(t)^{4k+1}dt.
\]
Thus, the equation (\ref{shapeq}) becomes
\[
\int_{-1/\sqrt{2}}^{1/\sqrt{2}}z(t)^{4k+1}dt=\frac{(-1)^{k}(\Go+1)}{2(2k+1)}.
\]
Due to the symmetry we have  $z(-t)=\bra{z(t)}$. Therefore, we finally
obtain
\begin{equation}
\re\left\{\int_{0}^{1/\sqrt{2}}(a(t)+it)^{4k+1}dt\right\}=
\frac{(-1)^{k}(\Go+1)}{4(2k+1)},\quad k\ge 0.
\label{fineq}
\end{equation}
In Fig.~\ref{fig:square} we show a numerical approximation of a hypothetically existing exact  solution of \eqref{fineq} with square symmetry and simple topology which should be taken at this point just as an indication of a general structure of the actual solution. It is presented to make a link with Fig.~\ref{fig:mhyst} where we show that  at a fixed value of the volume fraction $\Go$
two optimal configurations with the same energy always exist. As we have already seen in Fig.~\ref{fig:mhyst} , with the increase of $\Go$ the nucleus in the left panel in  Fig.~\ref{fig:square} (colored in yellow) will grow outward while the nucleus in the right panel (also colored in yellow)  will grow inward. At  $\Go=1$  both transformation  again  terminate as the second phase   completely takes over the whole domain.  

Observe now that $|a(t)+it|<1$ for all $t\in [0,1/\sqrt{2})$ and
$|a(t)+it|=1$ for $t=1/\sqrt{2}$. Thus, for large $k$ the principal
contribution to the integral (\ref{fineq}) comes only from the \nbh\ of
$t=1/\sqrt{2}$. Therefore, we approximate
\begin{equation}
  \label{corner}
a(t)=\nth{\sqrt{2}}+m(t-\nth{\sqrt{2}})+O((t-\nth{\sqrt{2}})^{2}),
\text{ as }t\to\nth{\sqrt{2}}.
\end{equation}
Substituting the leading term in (\ref{fineq}) and computing the integral explicitly
we obtain
\begin{equation}
  \label{asymp}
\frac{(-1)^{k}}{(4k+2)(m^{2}+1)} -
\frac{(1-m)^{4k+2}m}{2^{2k+1}(4k+2)(m^{2}+1)}
\approx\frac{(-1)^{k}(\Go+1)}{2(4k+2)}.
\end{equation}
If we choose
\begin{equation}
m=\sqrt{\frac{1-\Go}{1+\Go}}
\label{slopem}
\end{equation}
then for large $k$ we have equality in (\ref{fineq}) up to an exponentially
small error. Thus, we have determined the slope $m$ with which the curve $\GG$
enters the corner of the square.
We can integrate by parts in (\ref{fineq}), which we rewrite as
\[
\nth{4k+2}\re\left\{\int_{0}^{1/\sqrt{2}}\frac{\left[(a(t)+it)^{4k+2}\right]'}{i+a'(t)}dt\right\}=
\frac{(-1)^{k}(\Go+1)}{4(2k+1)}.
\]
We obtain
\begin{equation}
  \label{fineq1}
    \nth{4k+2}\re\left\{\frac{e^{i(4k+2)\pi/4}}{i+m}\right\}
-\nth{4k+2}\re\left\{\int_{0}^{1/\sqrt{2}}(a(t)+it)^{4k+2}\left[\nth{i+a'(t)}\right]'dt\right\}
  =\frac{(-1)^{k}(\Go+1)}{4(2k+1)}.
\end{equation}
But then we conclude that (\ref{fineq1}) is equivalent to
\begin{equation}
  \label{fineq0}
\re\left\{\int_{0}^{1/\sqrt{2}}(a(t)+it)^{4k+2}\left[\nth{i+a'(t)}\right]'dt\right\}=0,\qquad k\ge 0.
\end{equation}
We can now keep integrating by parts in (\ref{fineq0})
\[
\re\left\{\int_{0}^{1/\sqrt{2}}[(a(t)+it)^{4k+3}]'\nth{i+a'(t)}\left[\nth{i+a'(t)}\right]'dt\right\}=0.
\]
This implies that $a''(1/\sqrt{2})=0$. By induction we 
obtain $a^{(n)}(1/\sqrt{2})=0$ for all $n\ge 2$, showing the
breaking of analyticity of $a(t)$ near $t=1/\sqrt{2}$. This unexpected failure
of analyticity is our first hint that the solution we seek might not exist.

To make a more persuasive argument that this is the case
multiply the $k$th equation in (\ref{fineq}) by
$x^{4k+2}/(4k+1)!$ and sum, obtain the ``generating function equation''
\[
  \re\left\{\int_{0}^{1/\sqrt{2}}[\sinh(x(it+a(t)))+\sin(x(it+a(t)))]dt\right\}=
\frac{\Go+1}{2ix}(\cosh(\sqrt{i}x)-\cos(\sqrt{i}x)).
\]
Taking the real part we obtain
\begin{equation}
  \label{genf}
\int_{0}^{1/\sqrt{2}}[\cos(xt)\sinh(xa(t))+\sin(xa(t))\cosh(xt)]dt=
\frac{\Go+1}{x}\sinh\frac{x}{\sqrt{2}}\sin\frac{x}{\sqrt{2}}.
\end{equation}
The two sides of (\ref{genf}) represent entire function of $x$, and hence, the
equality is valid for all $x\in\bb{C}$.
It is convenient to rescale the equation: $u(s)=\sqrt{2}a(s/\sqrt{2})$, $z=x/\sqrt{2}$. Then
\begin{equation}
  \label{genf1}
\int_{0}^{1}[\cos(zs)\sinh(zu(s))+\sin(zu(s))\cosh(zs)]ds=\frac{\Go+1}{z}\sinh z\sin z.
\end{equation}
The function $u(s)$ is continuous and monotone increasing from $u(0)>0$ to 1
on $[0,1]$. It also satisfies $u'(0)=0$. We know that $u(s)\to s$,
when $\Go\to 0$ and we also know that $u(s)\sim 1+m(s-1)$, when
$s\approx 1$. Observe that $m\to 1$, when $\Go\to 0$, and $1+m(s-1)\to
s$. Let us then write $u(s)=u_{0}(s)+w(s)$, where $u_{0}(s)=1+m(s-1)$. Thus,
\[
\sinh(zu(s))=\sinh(zu_{0}(s))+2\sinh(zu_{0}(s))\sinh^{2}(zw(s)/2)+\cosh(zu_{0}(s))\sinh(zw(s)),
\]
and similarly,
\[
\sin(zu(s))=\sin(zu_{0}(s))-2\sin(zu_{0}(s))\sin^{2}(zw(s)/2)+\cos(zu_{0}(s))\sin(zw(s)),
\]
Observing that
\[
\int_{0}^{1}[\cos(zs)\sinh(zu_{0}(s))+\sin(zu_{0}(s))\cosh(zs)]ds=\frac{\Go+1}{z}\sinh z\sin z+R(z),
\]
where
\[
R(z)=\sqrt{1-\Go^{2}}\frac{\cos((1-m)z)-\cosh((1-m)z)}{2z},
\]
we obtain
\begin{multline}
\left.\int_{0}^{1}\right[\cos(zs)\cosh(zu_{0}(s))\sinh(zw(s))+\cosh(zs)\cos(zu_{0}(s))\sin(zw(s))+\\
\left.2\cos(zs)\sinh(zu_{0}(s))\sinh^{2}\left(\frac{zw(s)}{2}\right)
-2\cosh(zs)\sin(zu_{0}(s))\sin^{2}\left(\frac{zw(s)}{2}\right)\right]ds=\\
\sqrt{1-\Go^{2}}\frac{\cos((1-m)z)-\cosh((1-m)z)}{2z}.
\label{upert}
\end{multline}
If we now examine equation (\ref{upert}) in the limit $z\to+\infty$ we can
see that this is impossible. 

Indeed, the oscillatory terms
generate  at most polynomial decay at infinity, while the exponential terms
behave like $e^{zu_{0}(s)}$ attenuated by $\sin(w(s)z)$ or $\sinh(w(s)z)$. The
main contribution to such integrals comes from the vicinity of the maximizer
of $u_{0}(s)$, over the support $[0,\Ga]$ of $w(s)$. The integral will then have the
exponential growth $e^{(1-m+m\Ga)) z}$ possibly modulated by polynomially decaying
factors. However, the \rhs\ of (\ref{upert}) has the
exponential growth $e^{z(1-m)}$. Observing that $1-m=\min_{s\in[0,1]}u_{0}(s)$
we conclude that the integral on the \lhs\ of (\ref{upert}) cannot possibly
have that growth at infinity, unless the support of $w(s)$ is zero.  To summarize, we have  shown  that the equality
(\ref{upert}) cannot be satisfied  for all $m\in(0,1)$. We   note, however that $w(s)=0$ does satisfy (\ref{upert}) when $m=0$ or $m=1$. 

Finally, we remark that even if we had not assumed square symmetry of the solution our conclusion would not have changed. The reason is that equation (\ref{fineq}) corresponds to (\ref{theeq}) for $n=4k$. When $n$ has a different remainder mod 4 square symmetry  ensures that  the corresponding equation   is trivially satisfied. Breaking the symmetry simply
adds additional infinite systems on the extra degrees of freedom, ultimately
forcing one to assume  square symmetry which, as we have seen,  leads to  non-existence.
\def\cprime{$'$} \ifx \cedla \undefined \let \cedla = \c \fi\ifx \cyr
  \undefined \let \cyr = \relax \fi\ifx \cprime \undefined \def \cprime
  {$\mathsurround=0pt '$}\fi\ifx \prime \undefined \def \prime {'}
  \fi\def\Ya{Ya}

\end{document}